\newtheorem{theorem}{Theorem}
\newtheorem{corollary}[theorem]{Corollary}
\newtheorem{lemma}[theorem]{Lemma}
\newtheorem{definition}[theorem]{Definition}
\newtheorem{example}[theorem]{Example}
\newtheorem{proposition}[theorem]{Proposition}
\newcommand{\overbar}[1]
           {\mkern 1.5mu\overline{\mkern-1.5mu#1\mkern-1.5mu}\mkern 1.5mu}
\def\e{\mathrm{e}}
\def\Li{\mathrm{Li}}
\let\set\mathbb
\let\cal\mathcal
\def\lc{\operatorname{lc}}
\DeclareMathOperator{\quot}{Quot}
\DeclareMathOperator{\re}{Re}
\DeclareMathOperator{\im}{Im}
\title{D-finite Numbers\footnote{DOI: 10.1142/S1793042118501099}}
\author{\medskip Hui Huang\footnote{Most of the work presented in this
    article was carried out while Huang was affiliated at the Johannes
    Kepler University Linz and supported by the Austrian Science Fund
    (FWF) grant W1214-N15 (project part~13).} \\
    \smallskip
    {\footnotesize\em David R. Cheriton School of Computer Science, University
      of Waterloo}\\[-1ex]
    {\footnotesize\em 200 University Avenue West, Waterloo, Ontario,
      N2L 3G1,Canada}\\[-.5ex]
    {\footnotesize\em hui.huang@uwaterloo.ca}\\
    \medskip\\
    Manuel Kauers\footnote{Supported by the Austrian
      Science Fund (FWF) grants Y464-N18 and F5004.}\\
    {\footnotesize\em Institute for Algebra, Johannes Kepler
      University}\\[-.5ex]
    {\footnotesize\em Altenberger Strasse 69,
      4040 Linz, Austria}\\[-.5ex]
    {\footnotesize\em manuel.kauers@jku.at} }
\date{}
\begin{document}
\maketitle

\begin{abstract}
  \noindent
  D-finite functions and P-recursive sequences are defined in terms of
  linear differential and recurrence equations with polynomial
  coefficients.  In this paper, we introduce a class of numbers
  closely related to D-finite functions and P-recursive sequences.  It
  consists of the limits of convergent P-recursive sequences.
  Typically, this class contains many well-known mathematical
  constants in addition to the algebraic numbers.  Our definition of
  the class of D-finite numbers depends on two subrings of the field
  of complex numbers.  We investigate how different choices of these
  two subrings affect the class.  Moreover, we show that D-finite
  numbers are essentially limits of D-finite functions at the point
  one, and evaluating D-finite functions at non-singular algebraic
  points typically yields D-finite numbers.  This result makes it
  easier to recognize certain numbers to be D-finite.

  \medskip\noindent
  {\em Keywords:} Complex numbers; D-finite functions; P-recursive sequences;
  algebraic numbers; evaluation of special functions.

  \medskip\noindent
  {Mathematics Subject Classification 2010:} 11Y35, 33E30, 33F05, 33F10
\end{abstract}

\section{Introduction}\label{SEC:intro}

D-finite functions have been recognized long
ago~\cite{Stan1980,Lips1989,Zeil1990b,SaZi1994,Mall1996,Stan1999} as
an especially attractive class of functions.  They are interesting on
the one hand because each of them can be easily described by a finite
amount of data, and efficient algorithms are available to do exact as
well as approximate computations with them.  On the other hand, the
class is interesting because it covers a lot of special functions
which naturally appear in various different context, both within
mathematics as well as in applications.

The defining property of a {\em D-finite} function is that it
satisfies a linear differential equation with polynomial
coefficients. This differential equation, together with an appropriate
number of initial terms, uniquely determines the function at
hand. Similarly, a sequence is called {\em P-recursive} (or rarely,
{\em D-finite}) if it satisfies a linear recurrence equation with
polynomial coefficients.  Also in this case, the equation together
with an appropriate number of initial terms uniquely determines the
object.

In a sense, the theory of D-finite functions generalizes the theory of
algebraic functions. Many concepts that have first been introduced for
the latter have later been formulated also for the former. In
particular, every algebraic function is D-finite (Abel's theorem), and
many properties the class of algebraic function enjoys carry over to
the class of D-finite functions.

The theory of algebraic functions in turn may be considered as a
generalization of the classical and well-understood class of algebraic
numbers. The class of algebraic numbers suffers from being relatively
small. There are many important numbers, most prominently the numbers
$\e$ and~$\pi$, which are not algebraic.

Many larger classes of numbers have been proposed, let us just mention
three examples.  The first is the class of periods (in the
sense of Kontsevich and Zagier~\cite{KoZa2001}).  These numbers are
defined as the values of multivariate definite integrals of algebraic
functions over a semi-algebraic set.  In addition to all the algebraic
numbers, this class contains important numbers such as~$\pi$, all zeta
constants (the Riemann zeta function evaluated at an integer)
and multiple zeta values, but it is so far
not known whether for example~$\e$, $1/\pi$ or Euler's
constant~$\gamma$ are periods (conjecturally they are not).  
The second example is the class of all numbers that appear as values of
so-called G-functions (in the sense of Siegel~\cite{Sieg2014}) at
algebraic number arguments~\cite{FiRi2014,FiRi2016a}.  The class of
G-functions is a subclass of the class of D-finite
functions, and it inherits some useful properties of that class.
Among the values that G-functions can assume are $\pi$, $1/\pi$,
values of elliptic integrals, and multiple zeta values, but it is so
far not known whether for example~$\e$, Euler's constant $\gamma$ or a
Liouville number are such a value (conjecturally they are not).

Another class of numbers is the class of holonomic constants, studied
by Flajolet and Vall{\' e}e~\cite[\S 4]{FlVa2000}.  (We thank Marc
Mezzarobba for pointing us to this reference.)  A constant that is the
value $f(z_0)$ of a D-finite function $f(z)$ at an algebraic point
$z_0$ where $f(z)$ is regular (i.e., analytic) is called a {\em
  regular holonomic constant}. Classical examples are $\pi$, $\log(2)$
and the polylogarithm value $\Li_4(1/2)$. A {\em singular holonomic
  constant} is defined to be the value of a D-finite function $f(z)$
at a Fuchsian singularity (also known as regular
singularity~\cite{Waso1987}) of a defining differential equation
for~$f(z)$. Note that the classes of regular and singular holonomic
constants are not completely opposite to each other, since a constant
can be of both types. A typical example is Ap{\' e}ry's
constant~$\zeta(3)$. This constant is of singular type since
$\zeta(3)=\Li_3(1)$ where the polylogarithm function $\Li_3(z)$ is
D-finite and has a singularity at one of the Fuchsian type.  On the
other hand, $\zeta(3)$ is also a regular holonomic constant, because
$\Li_3(z)$ is a G-function and values of G-functions at algebraic
numbers are all of regular type by \cite[Theorem~1]{FiRi2014}.

It is tempting to believe that there is a strong relation between
holonomic constants and limits of convergent P-recursive sequences.
To make this relation precise, we introduce the class of D-finite
numbers in this paper. 

\begin{definition}\label{DEF:dfinitenos}
  Let $R$ be a subring of~$\set C$ and let $\set F$ be a subfield
  of~$\set C$.
  \begin{enumerate}
  \item
    A number $\xi\in\set C$ is called \emph{D-finite} (with respect to
    $R$ and~$\set F$) if there exists a convergent sequence
    $(a_n)_{n=0}^\infty$ in $R^{\set N}$ with $\lim_{n\to\infty}
    a_n=\xi$ and some polynomials $p_0,\dots,p_r\in\set F[n]$,
    $p_r\neq0$, such that
    \[
    p_0(n)a_n + p_1(n)a_{n+1} + \cdots + p_r(n)a_{n+r} = 0
    \]
    for all $n\in\set N$.

  \medskip
  \item
    The set of all D-finite numbers with respect to $R$ and $\set F$
    is denoted by $\cal D_{R,\set F}$. If $R=\set F$, we also write
    $\cal D_{\set F}:=\cal D_{\set F,\set F}$ for short.
  \end{enumerate}
\end{definition}

It is clear that $\cal D_{R,\set F}$ contains all the elements of~$R$,
but it typically contains many further elements.  For example, let $i$
be the imaginary unit, then $\cal D_{\set Q(i)}$ contains many (if not
all) the periods and, as we will see below, all the values of
G-functions as well as many (if not all) regular holonomic constants.
In addition, it is not hard to see that $\e$ and $1/\pi$ are D-finite
numbers.  According to Fischler and Rivoal's work~\cite{FiRi2016a},
also Euler's constant~$\gamma$ and any value of the Gamma function at
a rational number are D-finite.  (We thank Alin Bostan for pointing us
to this reference.)  We will show below that D-finite numbers are
essentially the limiting values of D-finite functions
at one. Moreover, the values D-finite functions can assume at
non-singular algebraic points are in fact D-finite numbers.  Together
with the work on arbitrary-precision evaluation of D-finite
functions~\cite{ChCh1990,vdHo1999,vdHo2001,vdHo2007a,Mezz2010,MeSa2010},
it follows that D-finite numbers are computable in the sense that for
every D-finite number~$\xi$ there exists an algorithm which for any
given $n\in\set N$ computes a numeric approximation of $\xi$ with a
guaranteed precision of $10^{-n}$. Consequently, all non-computable
numbers have no chance to be D-finite.  Besides these artificial
examples, we do not know of any explicit real numbers which are not
in~$\cal D_{\set Q}$, and we believe that it may be very difficult to
find some.

The definition of D-finite numbers given above involves two subrings
of $\set C$ as parameters: the ring to which the sequence terms of the
convergent sequences are supposed to belong, and the field to which
the coefficients of the polynomials in the recurrence equations should
belong.  Obviously, these choices matter, because we have, for
example, $\cal D_{\set R}=\set R\neq\set C=\cal D_{\set C}$.  Also,
since $\cal D_{\set Q}$ is a countable set, we have $\cal D_{\set
  Q}\neq\cal D_{\set R}$.  On the other hand, different choices of $R$
and $\set F$ may lead to the same classes.  For example, we would not
get more numbers by allowing $\set F$ to be a subring of~$\set C$
rather than a field, because we can always clear denominators in a
defining recurrence.  One of the goals of this article is to
investigate how $R$ and $\set F$ can be modified without changing the
resulting class of D-finite numbers.

As a long-term goal, we hope to establish the notion of D-finite
numbers as a class that naturally relates to the class of D-finite
functions in the same way as the classical class of algebraic numbers
relates to the class of algebraic functions.

\section{D-finite Functions and P-recursive Sequences}\label{SEC:dfinitefun}

\def\<#1>{\langle#1\rangle}
Throughout the paper, $R$ is a subring of $\set C$ and $\set F$ is a
subfield of~$\set C$, as in Definition~\ref{DEF:dfinitenos} above.  We
consider linear operators that act on sequences or power series and
analytic functions.  We write $S_n$ for the shift operator w.r.t.~$n$
which maps a sequence $(a_n)_{n=0}^\infty$ to
$(a_{n+1})_{n=0}^\infty$.  The set of all linear operators of the form
$L:=p_0+p_1S_n+\cdots+p_rS_n^r$, with $p_0,\dots,p_r\in\set F[n]$,
forms an Ore algebra; we denote it by $\set F[n]\<S_n>$.  Analogously,
we write $D_z$ for the derivation operator w.r.t.~$z$ which maps a
power series or function $f(z)$ to its derivative
$f'(z)=\frac{d}{dz}f(z)$.  Also the set of linear operators of the
form $L:=p_0+p_1D_z+\cdots+p_rD_z^r$, with $p_0,\dots,p_r\in\set
F[z]$, forms an Ore algebra; we denote it by $\set F[z]\<D_z>$.  For
an introduction to Ore algebras and their actions,
see~\cite{BrPe1996}.  When $p_r\neq0$, we call $r$ the \emph{order} of
the operator and $\lc(L):=p_r$ its \emph{leading coefficient.}

\begin{definition}\label{DEF:dfinite}\leavevmode\null
  \begin{enumerate}
  \item
    A sequence $(a_n)_{n=0}^\infty\in R^{\set N}$ is called {\em
      P-recursive} or \emph{D-finite} over $\set F$ if there exists a
    nonzero operator $L= \sum_{j=0}^r p_j(n) S_n^j \in \set
    F[n]\langle S_n \rangle$ such that
    \[
    L\cdot a_n = p_r(n) a_{n+r} + \dots + p_1(n) a_{n+1} + p_0(n) a_n = 0
    \]
    for all $n\in \set N$.

    \medskip
  \item
    A formal power series $f(z)\in R[[z]]$ is called {\em D-finite}
    over $\set F$ if there exists a nonzero operator $L= \sum_{j=0}^r
    p_j(z) D_z^j \in \set F[z]\langle D_z\rangle$ such that
    \[
    L \cdot f(z) = p_r(z) D_z^r f(z) + \dots + p_1(z) D_z f(z) + p_0(z) f(z) = 0.
    \]

    \smallskip
  \item
    An analytic function $f\colon U\to\set C$ defined in some open set
    $U\subseteq\set C$ is called \emph{D-finite} over $\set F$ if
    there exists a nonzero operator $L= \sum_{j=0}^r p_j(z) D_z^j \in
    \set F[z]\langle D_z\rangle$ such that
    \[
    L \cdot f(z) = p_r(z) D_z^r f(z) + \dots + p_1(z) D_z f(z) + p_0(z) f(z) = 0
    \]
    for all $z\in U$.

    \medskip
  \item
    A formal power series $f(z) \in \set F[[z]]$ is called {\em
      algebraic} over $\set F$ if there exists a nonzero bivariate
    polynomial $P(z,y) \in \set F[z,y]$ such that $P(z,f(z)) = 0$.

    \medskip
  \item
    An analytic function $f\colon U\to\set C$ defined in some open set
    $U\subseteq\set C$ is called \emph{algebraic} over~$\set F$ if
    there exists a nonzero bivariate polynomial $P(z,y)\in\set F[z,y]$
    such that $P(z,f(z))=0$ for all $z\in U$.
  \end{enumerate}
\end{definition}

Unless there is a danger of confusion, we will not strictly
distinguish between complex functions that are analytic in some
neighborhood of zero and the formal power series appearing as their
Taylor expansions at zero. In particular, if the formal power series
$f\in\set F[[z]]$ happens to be convergent, we may as well regard it
as an analytic function defined in some open neighborhood of zero.

A formal power series (or function) is D-finite if and only if its
coefficient sequence is P-recursive.  Many functions like
exponentials, logarithms, sine, arcsine and hypergeometric series, as
well as many formal power series like $\sum n! z^n$, are
D-finite. Hence their respective coefficient sequences are
P-recursive.

The class of D-finite functions (resp.\ P-recursive sequences) is
closed under certain operations: addition, multiplication, derivative
(resp.\ forward shift) and integration (resp.\ summation). In
particular, the set of D-finite functions (resp.\ P-recursive
sequences) forms a left-$\set F[z]\<D_z>$-module (resp.\ a left-$\set
F[n]\<S_n>$-module). Also, if $f$ is a D-finite function and $g$ is an
algebraic function, then the composition $f\circ g$ is D-finite. These
and further closure properties are easily proved by linear algebra
arguments, proofs can be found for instance
in~\cite{Stan1980,SaZi1994,KaPa2011}. We will make free use of these
facts.

We will be considering singularities of D-finite functions. Recall
from the classical theory of linear differential
equations~\cite{Ince1944} that a linear differential equation
$p_0(z)f(z)+\cdots+p_r(z)f^{(r)}(z)=0$ with polynomial coefficients
$p_0,\dots,p_r\in\set F[z]$ and $p_r\neq0$ has a basis of analytic
solutions in a neighborhood of every point $\xi\in\set C$, except
possibly at roots of~$p_r$. The roots of $p_r$ are therefore called
the \emph{singularities} of the equation (or the corresponding linear
operator).  All other points are {\em ordinary points} (or {\em
  non-singular points}) of the equation.  The behaviour of a D-finite
function near a singularity $z_0$ can in general not be described by a
formal power series, but it is always a linear combination of
generalized series of the form
\[
\exp(P((z-z_0)^{-1/s}))(z-z_0)^\alpha a\bigl((z-z_0)^{1/s},
\log(z-z_0)\bigr)
\]
for some $s\in\set N$, $P\in\bar{\set F}[z]$, $\alpha\in\bar{\set F}$,
and $a\in\bar{\set F}[[x]][y]$.  See~\cite{Ince1944} for details of
this construction. Formal power series are in general also not
sufficient to describe the behaviour for algebraic functions, but such
functions are always linear combinations of so-called Puiseux series,
which can be written in the form
\[
\sum_{n=n_0}^\infty a_n (z-z_0)^{n/s}
\]
for some $n_0\in\set Z$ and some positive integer $s$. See,
e.g.,~\cite{Walk1950} for details.

It can happen that $\xi\in\set C$ is a singularity of the equation but
the equation nevertheless admits a basis of analytic solutions at this
point. Such a singularity is called an \emph{apparent singularity.}
It is well-known~\cite{Ince1944,CKS2016} that for any given linear
differential equations with some apparent and some non-apparent
singularities, we can always construct another linear differential
equation (typically of higher order) whose solution space contains the
solution space of the first equation and whose only singularities are
the non-apparent singularities of the first equation. This process is
known as desingularization. For later use, we will give a proof of the
composition closure property for D-finite functions which pays
attention to the singularities.

\begin{theorem}\label{THM:algsubs}
  Let $P(z,y)\in\set F[z,y]$ be a polynomial of degree~$d$ in~$y$, and
  let $L\in\set F[z]\<D_z>$ nonzero.  Let~$\zeta\in\set C$ be such
  that $P$ defines $d$ distinct analytic algebraic functions $g(z)$
  with $P(z,g(z))=0$ in a neighborhood $\Omega$ of~$\zeta$, and assume
  that for none of these functions, the value $g(\zeta)\in\set C$ is a
  singularity of~$L$.  Fix a solution~$g$ of~$P$ and an analytic
  solution $f$ of~$L$ defined in a neighborhood of~$g(\zeta)$. Then
  there exists a nonzero operator $M\in\set F[z]\langle D_z\rangle$
  with $M\cdot(f\circ g)=0$ which does not have $\zeta$ among its
  singularities.  Moreover, any point in the neighborhood $\Omega$
  with the property that none of the evaluations at this point of the
  $d$ solutions of $P$ near $\zeta$ gives a singularity of $L$, is an
  ordinary point of $M$.
\end{theorem}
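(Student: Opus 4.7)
My plan is to carry out the usual linear-algebra construction for the closure of D-finite functions under composition with algebraic functions, but inside a localized coefficient ring $\set F[z,1/\Delta]$ for a suitable polynomial $\Delta\in\set F[z]$, so that the leading coefficient of the resulting operator is forced to be a power of $\Delta$, a polynomial that cannot vanish at $\zeta$ or at any point of $\Omega$ satisfying the hypothesis of the moreover clause.

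Concretely, I will set $p_r:=\lc(L)$ and
\[
  \Delta(z) := \operatorname{disc}_y(P)(z)\cdot\operatorname{Res}_y(P,p_r)(z)\in\set F[z].
\]
The hypotheses force $\Delta(\zeta)\neq 0$: the factor $\operatorname{disc}_y(P)(\zeta)$ is nonzero because $P$ has $d$ distinct roots at $\zeta$, and $\operatorname{Res}_y(P,p_r)(\zeta)$ agrees, up to a nonzero factor, with $\prod_i p_r(g_i(\zeta))$, which is nonzero by assumption; the same reasoning applies at any $\zeta_0\in\Omega$ with the moreover property. I then consider the free $\set F[z,1/\Delta]$-module $V$ on the formal basis $\{g^a f^{(b)}(g):0\le a<d,\ 0\le b<r\}$, with $g$ and $f$ subject to $P(z,g)=0$ and $L\cdot f=0$. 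Equipping $V$ with the $D_z$-action induced by $D_z(g)=-\partial_z P(z,g)/\partial_y P(z,g)$, the Leibniz rule, and the reduction $p_r(z)f^{(r)}=-\sum_{b<r}p_b(z)f^{(b)}$, the key technical point is that the matrix of this action in the chosen basis has entries in $\set F[z,1/\Delta]$: inverting $\partial_y P(z,g)$ and $p_r(g)$ in $\set F(z)(g)$ and then expanding the result in the $\set F(z)$-basis $\{1,g,\dots,g^{d-1}\}$ only introduces the norms $N_{\set F(z)(g)/\set F(z)}(\partial_y P(z,g))$ and $N_{\set F(z)(g)/\set F(z)}(p_r(g))$ as denominators, and these coincide up to nonzero factors in $\set F[z]$ with $\operatorname{disc}_y(P)$ and $\operatorname{Res}_y(P,p_r)$.

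Setting $F:=f^{(0)}(g)\in V$ and using that $\set F[z,1/\Delta]$ is Noetherian, the ascending chain $U_n:=\sum_{k<n}\set F[z,1/\Delta]\cdot D_z^k F$ stabilizes at some $s\le rd$, yielding a relation $D_z^s F=\sum_{k<s}c_k D_z^k F$ with $c_k\in\set F[z,1/\Delta]$. Multiplying through by a sufficiently large power $\Delta^N$ clears denominators and gives
\[
  M := \Delta^N D_z^s-\sum_{k=0}^{s-1}\Delta^N c_k\,D_z^k\in\set F[z]\<D_z>,
\]
whose leading coefficient is $\Delta^N$ up to a nonzero constant. Specializing the formal symbols $g$ and $f^{(b)}$ to the chosen analytic branch of $P$ and to the derivatives of the chosen analytic solution of $L$ is a well-defined $D_z$-equivariant ring homomorphism from $V$ into germs of analytic functions at any point with $\Delta\neq 0$, turning the identity $M\cdot F=0$ in $V$ into the analytic identity $M\cdot(f\circ g)=0$. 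Since $\Delta^N(\zeta_0)\neq 0$ for every $\zeta_0\in\Omega$ with the good property (in particular for $\zeta$), no such point is a root of the leading coefficient of $M$, hence none of them is a singularity.

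The hard part, I expect, is the bookkeeping in the second paragraph: verifying that the $D_z$-action on $V$ really lands in $\set F[z,1/\Delta]$ rather than in a larger localization, and that repeated application of $D_z$ does not smuggle in further denominators. This reduces to the explicit norm identities mentioned above, together with the routine observation that the reductions modulo $P$ and modulo $L$ preserve $\set F[z,1/\Delta]$-integrality.
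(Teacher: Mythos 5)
Your strategy---running the standard linear-algebra closure argument over the localization $\set F[z,1/\Delta]$ so that the leading coefficient of $M$ is forced to be a power of $\Delta$---is a genuinely different route from the paper's, and where it applies it is attractive (it even yields effective bounds). But it hinges on the claim that $\Delta(\zeta)\neq0$, and that claim does not follow from the hypotheses. The theorem assumes that $P$ defines $d$ \emph{distinct analytic functions} on a neighborhood of~$\zeta$; it does not assume that their \emph{values at $\zeta$} are distinct. Distinct branches can collide at $\zeta$ while each remains analytic (think of $y^2-z^2$ at $\zeta=0$), and then $\operatorname{disc}_y(P)(\zeta)=0$, so $\Delta(\zeta)=0$. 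This is not a corner case: in the paper's only application of the theorem (Lemma~\ref{LEM:evalFbar}), the polynomial $P$ produced by Lemma~\ref{LEM:algseq} has all $d$ branches satisfying $g_j(0)=0$, and the theorem is invoked precisely at $\zeta=0$, where your $\Delta$ vanishes. Your $M$ then has leading coefficient $\Delta^N$ vanishing at $\zeta$, so $\zeta$ is a singularity of $M$---at worst an apparent one, but the theorem requires it to be an ordinary point. The same problem affects the ``moreover'' clause at any $\eta\in\Omega$ where two branches collide.

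What is missing is exactly the final step of the paper's proof: note that the solution space of the constructed operator is spanned by the functions $f_i\circ g_j$, all of which are analytic at $\zeta$ (and at the good points $\eta$), so these points are at worst apparent singularities, and then invoke desingularization to replace $M$ by a possibly higher-order operator that is ordinary there. (The paper builds its pre-desingularized operator differently, by substituting $z\mapsto g_j$, $D_z\mapsto(g_j')^{-1}D_z$ into $L$, taking the least common left multiple over all conjugates $g_j$, and descending to $\set F(z)\langle D_z\rangle$ by Galois invariance; your module-theoretic construction achieves that descent implicitly and is fine as far as it goes.) Two smaller points: expanding $1/\partial_yP(z,g)$ and $1/p_r(g)$ in the power basis $\{1,g,\dots,g^{d-1}\}$ also requires inverting $\lc_y(P)$ when $P$ is not monic in $y$, so that factor should be included in $\Delta$ (and its nonvanishing at the relevant points argued); and the bound $s\le rd$ does not follow from Noetherianity alone---an ascending chain in a rank-$rd$ module over $\set F[z,1/\Delta]$ can be longer than $rd$ steps---although the existence of some finite $s$ does, which is all you actually need.
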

\begin{proof} (borrowed from \cite{KaPo2017})
  Let $g$ be a root of $P$ near~$\zeta$.  If $g$ is constant, then so
  is $f\circ g$ and we can take $M=D_z$. Suppose that $g$ is not
  constant.  Without loss of generality, we may assume that $P$ is
  irreducible (if it is not, replace $P$ by the minimal polynomial
  of~$g$). Then none of the solutions of $P$ is constant.
	
  Consider the operator $\tilde L:=L(g,(g')^{-1}D_z)\in
  \overline{\set F(z)}\<D_z>$.
  Because of $D_z\cdot(f\circ g)=(f'\circ g)g'$, we
  have $L\cdot f=0$ if and only if $\tilde L\cdot(f\circ g)=0$.
  Therefore, if $f_1,\dots,f_r$ is a basis of the solution space of
  $L$ near $g(\zeta)$, then $f_1\circ g,\dots,f_r\circ g$ is a basis
  of the solution space of $\tilde L$ near~$\zeta$.
	
  Let $g_1,\dots,g_d$ be all the solutions of $P$ near~$\zeta$, and
  let $M$ be the least common left multiple of all the operators
  $L(g_j,(g_j')^{-1}D_z)$.  Then the solution space of $M$ near~$\zeta$
  is generated by all the functions $f_i\circ g_j$.  The
  Galois group $G$ of $P$ consists of all automorphisms of the field
  $K=\set F(z,g_1,\dots,g_d)$ which leave $\set F(z)$ fixed.  The
  Galois group respects the differential structure of the field~$K$ in
  the sense that for all $u\in K$ and all $\pi\in G$ we have
  $\pi(u')=\pi(u)'$.  Therefore, the action of $G$ on $K$ naturally
  extends to an action of $G$ on the ring $K\<D_z>$ of linear
  differential operators.  Since $M$ is the least common left multiple
  of all the operators $L(g_j,(g_j')^{-1}D_z)$, regardless of their
  order, we have $\pi(M)=M$ for all $\pi\in G$.  This implies that
  $M\in\set F(z)\<D_z>$.  (This argument already appears in Section~61
  of~\cite{schl1895}.)
	
  After clearing denominators (from the left) if necessary, we may
  assume that $M$ is an operator in $\set F[z]\<D_z>$ whose solution
  space is generated by functions that are analytic at~$\zeta$. Since
  $g_j$ are analytic at any point $\eta$ in the neighborhood $\Omega$,
  the functions that generate the solution space of~$M$ are also
  analytic at $\eta\in \Omega$ provided that none of the values
  $g_j(\eta)$ is a singularity of~$L$.  Therefore, by the remarks made
  about desingularization, it is possible to replace $M$ by an
  operator (possibly of higher order) which does not have $\zeta$ and
  such $\eta$ among its singularities.
\end{proof}

By a similar argument, we see that algebraic extensions of the
coefficient field of the recurrence operators are useless. Moreover,
it is also not useful to make $\set F$ bigger than the quotient field
of~$R$.

\goodbreak
\begin{lemma}\label{lemma:changeopseq}\leavevmode\null
  \begin{enumerate}
  \item\label{lemma:changeopseq1}
    If $\set E$ is an algebraic extension field of $\set F$ and
    $(a_n)_{n=0}^\infty$ is P-recursive over $\set E$, then it is also
    P-recursive over~$\set F$.

    \medskip
  \item\label{lemma:changeopseq2}
    If $R\subseteq\set F$ and $(a_n)_{n=0}^\infty\in R^{\set N}$ is
    P-recursive over~$\set F$, then it is also P-recursive over
    $\quot(R)$, the quotient field of~$R$.

    \medskip
  \item\label{lemma:changeopseq3}
    If $\set F$ is closed under complex conjugation and
    $(a_n)_{n=0}^\infty$ is P-recursive over~$\set F$, then so are
    $(\bar a_n)_{n=0}^\infty$, $(\re(a_n))_{n=0}^\infty$, and
    $(\im(a_n))_{n=0}^\infty$.
  \end{enumerate}
\end{lemma}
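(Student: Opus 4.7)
The three parts are largely independent, and each uses a standard technique: a Galois/LCLM argument, a rank-invariance linear-algebra argument, and a direct conjugation argument. I would treat them in turn.

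For part~\ref{lemma:changeopseq1}, I would first reduce to the case where $\set E/\set F$ is a finite Galois extension. Indeed, the operator $L=\sum_{j=0}^r p_j(n)S_n^j$ annihilating $(a_n)_{n=0}^\infty$ has only finitely many coefficients of its polynomials $p_j\in\set E[n]$, so these coefficients lie in some finite subextension, and by passing to its Galois closure inside $\overline{\set F}$, I may assume $\set E/\set F$ is finite Galois with group $G$. For each $\sigma\in G$, let $\sigma(L)\in\set E[n]\<S_n>$ denote the operator obtained by applying $\sigma$ to the coefficients of the polynomials of~$L$. Viewing everything in the Ore algebra $\set E(n)[S_n]$, let $M:=\lclm_{\sigma\in G}\sigma(L)$. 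Then $M$ is a left multiple of $L$ itself, so $M\cdot a_n=0$; and $M$ is $G$-invariant (up to scalar) because applying any $\tau\in G$ just permutes the family $\{\sigma(L)\}_{\sigma\in G}$, so its coefficients lie in $\set E(n)^G=\set F(n)$. Clearing denominators from the left yields an operator in $\set F[n]\<S_n>$ annihilating $(a_n)_{n=0}^\infty$. This is the same trick used in the proof of Theorem~\ref{THM:algsubs}.

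For part~\ref{lemma:changeopseq2}, write $V:=\quot(R)$ and fix bounds $r$ and~$d$ on the order and the polynomial-coefficient degree, large enough so that the hypothesis gives a nonzero annihilating operator of that shape over~$\set F$. Writing an unknown operator as $\sum_{j=0}^r\sum_{k=0}^d c_{j,k}\,n^k S_n^j$, the condition that it annihilate $(a_n)_{n=0}^\infty$ becomes, for each $n\in\set N$, the linear equation $\sum_{j,k} c_{j,k}\,n^k a_{n+j}=0$ in the unknowns $c_{j,k}$. The coefficients of this infinite homogeneous system all lie in $R\subseteq V$. By assumption the system has a nonzero solution in $\set F^{(r+1)(d+1)}$; since the rank of a matrix over $V$ equals its rank after extending scalars to the larger field~$\set F$ (and this is true of every finite submatrix, hence of the infinite matrix in the sense that the solution spaces have the same dimension), there is also a nonzero solution in $V^{(r+1)(d+1)}$, yielding the desired operator in $\quot(R)[n]\<S_n>$.

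For part~\ref{lemma:changeopseq3}, given $L=\sum_{j=0}^r p_j(n)S_n^j\in\set F[n]\<S_n>$ with $L\cdot a_n=0$, define $\bar L:=\sum_{j=0}^r \bar p_j(n)S_n^j$, where $\bar p_j$ has the complex-conjugated coefficients. By hypothesis, $\bar p_j\in\set F[n]$, so $\bar L\in\set F[n]\<S_n>$; and applying complex conjugation to the identity $L\cdot a_n=0$ gives $\bar L\cdot\bar a_n=0$. Hence $(\bar a_n)_{n=0}^\infty$ is P-recursive over~$\set F$. The claims for $\re(a_n)=(a_n+\bar a_n)/2$ and $\im(a_n)=(a_n-\bar a_n)/(2i)$ then follow from the closure of P-recursive sequences over $\set F$ under $\set F$-linear combinations (e.g.\ by taking the LCLM of $L$ and $\bar L$).

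The only step with any real subtlety is the LCLM/Galois argument in part~\ref{lemma:changeopseq1}; parts~\ref{lemma:changeopseq2} and~\ref{lemma:changeopseq3} are essentially immediate from standard facts about rank-invariance under field extension and from the well-known closure properties of P-recursive sequences.
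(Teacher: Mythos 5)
Your proposal is correct, and for parts~\ref{lemma:changeopseq1} and~\ref{lemma:changeopseq3} it is essentially the paper's own argument: the paper likewise passes to a finite subextension (via a primitive element $\theta$ with $L\in\set F(\theta)[n]\langle S_n\rangle$), takes the least common left multiple of all conjugates of $L$, and uses Galois invariance to conclude $M\in\set F[n]\langle S_n\rangle$; part~\ref{lemma:changeopseq3} is word for word the conjugated-operator argument plus closure under linear combinations. The genuine divergence is in part~\ref{lemma:changeopseq2}. The paper argues constructively: it expands the finitely many coefficients $p_{mj}\in\set F$ of the given operator in a basis $\alpha_1,\dots,\alpha_s$ of their $\quot(R)$-span, so that $0=L\cdot a_n=\sum_{\ell}\alpha_\ell b_n^{(\ell)}$ with each $b_n^{(\ell)}\in\quot(R)$, and concludes from linear independence over $\quot(R)$ that every $b_n^{(\ell)}$ vanishes, i.e.\ that each operator $\sum_{m,j}c_{mj\ell}n^jS_n^m$ (at least one of which is nonzero) annihilates the sequence. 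You instead fix the shape $(r,d)$ and invoke invariance of the rank of the resulting homogeneous linear system under the field extension $\quot(R)\subseteq\set F$. The two arguments are equivalent pieces of linear algebra over a field extension; yours is shorter and more ``automatic'' but needs the observation you correctly supply, namely that an infinite system in finitely many unknowns attains its rank on a finite subsystem, while the paper's version has the mild advantage of producing the new operator explicitly from the coefficients of the old one. One shared gloss, not counted against you: in part~\ref{lemma:changeopseq1} the identity $M=QL$ holds with $Q$ having rational-function coefficients, so the conclusion $M\cdot a_n=0$ is immediate only for the $n$ avoiding the poles of $Q$; the paper silently ignores this standard (and easily repaired) technicality as well.
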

\begin{proof}
  \begin{enumerate}
  \item Let $L\in\set E[n]\<S_n>$ be an annihilating operator of
    $(a_n)_{n=0}^\infty$. Then, since $L$ has only finitely many
    coefficients, $L\in\set F(\theta)[n]\<S_n>$ for some
    $\theta\in\set E$. Let $M$ be the least common left multiple of
    all the conjugates of~$L$.  Then $M$ is an annihilating operator
    of~$(a_n)_{n=0}^\infty$ which belongs to $\set F[n]\<S_n>$. The
    claim follows.

    \medskip
  \item
    Let us write $\set K=\quot(R)$.  Let $L\in\set F[n]\<S_n>$ be a
    nonzero annihilating operator of~$(a_n)_{n=0}^\infty$.  Since
    $\set F$ is an extension field of~$\set K$, it is a vector space
    over~$\set K$. Write
    \[
    L = \sum_{m=0}^r \sum_{j=0}^{d_m} p_{mj} n^j S_n^m,
    \]
    where $r, d_m \in \set N$ and $p_{mj} \in \set F$ not all
    zero. Then the set of the coefficients $p_{ij}$ belongs to a
    finite dimensional subspace of~$\set F$. Let $\{\alpha_1, \dots,
    \alpha_s\}$ be a basis of this subspace over~$\set K$. Then for
    each pair $(m,j)$, there exists $c_{mj\ell} \in \set K$ such that
    $p_{mj} = \sum_{\ell = 1}^s c_{mj\ell} \alpha_\ell$, which gives
    \[
    0 = L\cdot a_n = \sum_{\ell=1}^s \alpha_\ell
    \underbrace{\left(\sum_{m=0}^r\sum_{j=0}^{d_m} c_{mj\ell} n^j
      a_{n+m}\right)}_{=:b_n\in\set K}.
    \]
    For all $n\in \set N$, it follows from the linear independence of
    $\{\alpha_1, \dots, \alpha_s\}$ that $b_n=0$.  Therefore
    \[
    \sum_{m=0}^r\underbrace{\left(\sum_{j=0}^{d_m} c_{mj\ell}
      n^j\right)}_{\in \set K[n]}S_n^m\cdot a_n = 0 \quad \text{for
      all}\ n\in \set N \ \text{and}\ \ell = 1, \dots, s.
    \]
    Thus $(a_n)_{n=0}^\infty$ has a nonzero annihilating operator with
    coefficients in $\set K[n]$.

    \medskip
  \item Since $(a_n)_{n=0}^\infty$ is P-recursive over~$\set F$, there
    exists a nonzero operator~$L$ in $\set F[n]\langle S_n\rangle$
    such that $L \cdot a_n = 0$.  Hence $\bar L \cdot {\bar a_n} =0$
    where $\bar L$ is the operator obtained from $L$ by taking the
    complex conjugate of each coefficient.  Since $\set F$ is closed
    under complex conjugation by assumption, $\bar L$~belongs to $\set
    F[n]\<S_n>$, and hence $(\bar a_n)_{n=0}^\infty$ is P-recursive
    over~$\set F$. Because of
    \[\re(a_n) = \frac12(a_n + \bar a_n)\quad \text{and}\quad
    \im(a_n) = \frac1{2i}(a_n - \bar a_n),\]
    where $i$ is the imaginary unit,
    the assertions follow by closure properties.
  \end{enumerate}
\end{proof}

\goodbreak
Of course, all the statements hold analogously for D-finite formal
power series instead of P-recursive sequences.

If a D-finite function is analytic in a neighborhood of zero, then it
can be extended by analytic continuation to any point in the complex
plane except for finitely many ones, namely the singularities of the
given function. Those closest to the origin are called {\em dominant
  singularities} of the function. In this sense, D-finite functions
can be evaluated at any non-singular point by means of analytic
continuation. Numerical evaluation algorithms for D-finite functions
have been developed
in~\cite{ChCh1990,vdHo1999,vdHo2001,vdHo2007a,Mezz2010,MeSa2010},
where the last two references also provide a {\sc Maple}
implementation, namely the {\sf NumGfun} package, for computing such
evaluations.  These algorithms perform arbitrary-precision evaluations
with full error control.

\section{Algebraic Numbers}\label{SEC:algnos}

Before turning to general D-finite numbers, let us consider the
subclass of algebraic functions. We will show that in this case, the
possible limits are precisely the algebraic numbers. For the purpose
of this article, let us say that a sequence $(a_n)_{n=0}^\infty\in\set
F^{\set N}$ is \emph{algebraic} over $\set F$ if the corresponding
power series~$\sum_{n=0}^\infty a_nz^n\in\set F[[z]]$ is algebraic in
the sense of Definition~\ref{DEF:dfinite}. Since algebraic functions
are D-finite, it is clear that algebraic sequences are P-recursive.
We will write $\cal A_{\set F}$ for the set of all complex numbers
which are limits of convergent algebraic sequences over~$\set F$.

Recall that two sequences $(a_n)_{n=0}^\infty$, $(b_n)_{n=0}^\infty$
with at most finitely many zero terms are called \emph{asymptotically
  equivalent,} written $a_n\sim b_n$ ($n\to\infty$), if the quotient
$a_n/b_n$ converges to one as $n$ tends to infinity. Similarly, two
complex functions $f(z)$ and $g(z)$ are called {\em asymptotically
  equivalent} at a point~$\zeta\in\set C$, written $f(z)\sim g(z)$
($z\to\zeta$), if the quotient $f(z)/g(z)$ converges to one as $z$
approaches~$\zeta$. These notions are connected by the following
classical theorem.

\begin{theorem}\label{THM:asympt}\leavevmode\null
  \begin{enumerate}
  \item\label{THM:transfer} (Transfer theorem~\cite{FlOd1990,FlSe2009})
    Assume that $f(z)\in \set F[[z]]$ is analytic at zero with the
    only dominant singularity $z=1$ and
    \[f(z) \sim \frac{1}{(1-z)^\alpha}\quad (z\to 1)\]
    with $\alpha \notin \{0,-1,-2,\dots\}$. Then
    \[
      [z^n]f(z)\sim \frac{n^{\alpha-1}}{\Gamma(\alpha)} \quad (n\to\infty),
     \]
     where $\Gamma(z)$ stands for the Gamma function and the notation
     $[z^n]f(z)$ refers to the coefficient of $z^n$ in $f(z)$.

     \medskip
   \item\label{THM:abelian} (Basic Abelian theorem~\cite{FGS2004})
     Let $(a_n)_{n=0}^\infty\in \set F^{\set N}$ be a sequence that
     satisfies the asymptotic estimate
     \[
     a_n\sim n^{\alpha}\quad (n\rightarrow\infty),
     \]
     where $\alpha \geq 0$.  Then the generating function $f(z) =
     \sum_{n=0}^\infty a_n z^n$ satisfies the asymptotic estimate
     \[
     f(z) \sim \frac{\Gamma(\alpha + 1)}{(1-z)^{\alpha+1}} \quad
     (z\rightarrow 1^{-}).
     \]
     This estimate remains valid when $z$ tends to one in any sector
     with vertex at one symmetric about the horizontal axis, and with
     opening angle less than~$\pi$.
  \end{enumerate}
\end{theorem}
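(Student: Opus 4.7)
Both parts are classical results from singularity analysis and from the theory of Abelian/Tauberian theorems; for the purposes of this paper one would simply cite \cite{FlOd1990,FlSe2009,FGS2004}, but the underlying arguments proceed as follows.

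For part~\ref{THM:transfer}, the starting point is Cauchy's coefficient formula
\[
[z^n] f(z) = \frac{1}{2\pi i}\oint_\gamma \frac{f(z)}{z^{n+1}}\,dz,
\]
with $\gamma$ initially a small circle around the origin. The hypothesis that $1$ is the only dominant singularity, together with $f(z)\sim(1-z)^{-\alpha}$, implicitly requires that $f$ continues analytically into a $\Delta$-domain at~$1$ (an indented disk of radius greater than~$1$ with a small closed sector removed at~$1$). I would deform $\gamma$ into a Hankel-type contour $\mathcal H$ consisting of the outer arc $|z|=1+\eta$ together with two linear segments joined by a small circle of radius $1/n$ around~$1$. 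The outer-arc contribution is $O((1+\eta)^{-n})$, hence negligible. On the inner part I substitute $z=1+t/n$; the asymptotic converts the integrand into $n^{\alpha-1}(-t)^{-\alpha}e^t(1+o(1))$, and the classical Hankel integral
\[
\frac{1}{\Gamma(\alpha)} = \frac{1}{2\pi i}\int_{\mathcal H_0} e^t(-t)^{-\alpha}\,dt,
\]
valid precisely for $\alpha\notin\{0,-1,-2,\dots\}$, delivers $[z^n]f(z)\sim n^{\alpha-1}/\Gamma(\alpha)$.

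For part~\ref{THM:abelian}, I would first verify the reference case $a_n = n^\alpha$: the generating function $\sum_n n^\alpha z^n$ is essentially the polylogarithm $\Li_{-\alpha}(z)$, whose singular expansion at~$1$ is classical and equals $\Gamma(\alpha+1)(1-z)^{-\alpha-1}$ to leading order. For the general case, write $a_n = n^\alpha(1+\epsilon_n)$ with $\epsilon_n\to 0$ and split
\[
f(z) = \sum_{n\leq N} a_n z^n + \sum_{n>N} n^\alpha z^n + \sum_{n>N} n^\alpha\epsilon_n z^n.
\]
Choosing $N$ so that $|\epsilon_n|<\varepsilon$ for $n>N$, the third sum is bounded in modulus by $\varepsilon$ times the main term, the first is $O(1)$, and the second contributes $\Gamma(\alpha+1)(1-z)^{-\alpha-1}(1+o(1))$. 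Letting $\varepsilon\to 0$ gives the asserted equivalent as $z\to 1^-$. To extend from the real interval to a symmetric sector of opening less than~$\pi$, I use that in such a sector $|1-z|$ and $1-|z|$ are comparable, so the estimate $|z|^n\leq e^{-cn|1-z|}$ holds uniformly and the same bookkeeping survives.

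The principal obstacle, in both parts, is the uniformity of the estimates throughout a full sectorial neighborhood of~$1$ rather than along a single ray: in part~\ref{THM:transfer} one needs $f(z)(1-z)^\alpha - 1 = o(1)$ uniformly in a $\Delta$-domain, and in part~\ref{THM:abelian} one needs the Abelian estimate uniformly in a sector of opening less than~$\pi$. The cleanest way to handle this is to prove the more general ``$O$'' and ``$o$'' transfer theorems first and deduce the $\sim$ statements as corollaries; \cite{FlSe2009} carries out this program in detail, and I would invoke those technical lemmas rather than redo the estimates.
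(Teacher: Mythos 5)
The paper offers no proof of this theorem: it is quoted as a known classical result, with the citations to Flajolet--Odlyzko/Flajolet--Sedgewick (for the transfer theorem) and Flajolet--Gerhold--Salvy (for the Abelian theorem) standing in place of an argument, exactly as you anticipate. Your sketch correctly reproduces the standard proofs from those sources --- the Hankel-contour evaluation of Cauchy's coefficient integral for part~(1), and the three-way splitting against the reference series $\sum_n n^\alpha z^n\sim\Gamma(\alpha+1)(1-z)^{-\alpha-1}$ for part~(2) --- and you rightly identify the two points requiring care: the implicit $\Delta$-domain hypothesis behind ``only dominant singularity $z=1$'' in part~(1) (which is satisfied in all of the paper's applications, where $f$ has a simple pole at $1$ and continues analytically to a larger disk), and the uniformity of the estimate throughout a sector in part~(2), for which the comparability of $1-|z|$ and $|1-z|$ is indeed the key. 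The only blemish is a sign convention in your Hankel integrand: with $z=1+t/n$ one obtains $e^{-t}(-t)^{-\alpha}$ on a contour winding about $[0,\infty)$, rather than $e^{t}(-t)^{-\alpha}$; this is immaterial to the structure of the argument.
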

Also recall the formal version of the implicit function
theorem~\cite{Soka2009}, which says that for any bivariate polynomial
$P(z,y)\in \set F[z,y]$ with $P(0,0) = 0$ and $(D_yP)(0,0) \neq 0$,
there exists a unique formal power series $f(z) \in \set F[[z]]$
with $f(0) =0$ so that $P(z,f(z)) = 0$.

With the above preparations, we are ready to develop the following key
lemma, which indicates that depending on whether $\set F$ is a real
field or not, every real algebraic number or every algebraic number
can appear as a limit of an algebraic sequence over $\set F$.

\begin{lemma}\label{LEM:minpoly}\leavevmode\null
  Let $p(y)\in\set F[y]$ be an irreducible polynomial of degree~$d$.
  Assume that $\zeta_1,\dots, \zeta_d\in \bar{\set F}$ are all the
  roots of $p$.  Then there exists a polynomial $P(z,y)\in\bar{\set F}[z,y]$
  of degree $d$ in~$y$ admitting $d$ distinct roots in
  $\bar{\set F}[[z]]$ such that for each $j = 1, \dots, d$, there is
  exactly one $f_j(z)\in\bar{\set F}[[z]]$ with $P(z,f_j(z))=0$ and
  $\lim_{n\to\infty}[z^n]f_j(z)=\zeta_j$. All these $f_j(z)$ are
  analytic at zero with the only possible dominant singularity at
  $z=1$, which can at most be a simple pole.  Furthermore, if
  \begin{align*}
    \text{either}\quad (\set F\subseteq\set R\ \text{and}\
    \zeta_1\in\bar{\set F}\cap\set R) \quad
    \text{or}\quad (\set F\setminus\set R\neq\emptyset),
  \end{align*}
  then $P(z,y)$ can be chosen in $\set F[z,y]$ so that $f_1(z)\in{\set F}[[z]]$.
\end{lemma}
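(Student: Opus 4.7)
The basic approach is to exploit the transfer correspondence between coefficient asymptotics and pole behaviour at $z=1$: if $f\in\bar{\set F}[[z]]$ defines an algebraic function analytic on the open unit disk with at worst a simple pole at $z=1$ of ``residue'' $-\zeta$, then Theorem~\ref{THM:asympt}(\ref{THM:transfer}) with $\alpha=1$ gives $[z^n]f(z)\to\zeta$. The plan is therefore to realize each $\zeta_j$ as such a residue of a power-series root of a suitable $P(z,y)$.

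For the main assertion, the minimal choice $P(z,y):=p((1-z)y)\in\set F[z,y]\subseteq\bar{\set F}[z,y]$ already works. Indeed, since $\set F$ has characteristic zero $p$ is separable, so factoring $p(y)=a_d\prod_j(y-\zeta_j)$ gives $P(z,y)=a_d\prod_j((1-z)y-\zeta_j)$, whose $d$ distinct power-series roots are $f_j(z):=\zeta_j/(1-z)\in\bar{\set F}[[z]]$: each is rational, analytic on $|z|<1$, has a simple pole at $z=1$, and satisfies $[z^n]f_j=\zeta_j$. This proves everything except the ``furthermore'' clause, where in general $\zeta_1\notin\set F$ prevents $f_1$ from lying in $\set F[[z]]$.

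For the ``furthermore'' clause my plan is to perturb the construction so that the implicit function theorem produces $f_1$ directly over $\set F$. I first note that $\set F$ is dense in the relevant ambient: since $\set Q\subseteq\set F$ is dense in $\set R$, this handles the real case; if instead $\alpha=a+bi\in\set F$ with $b\ne 0$, then $\{r+s\alpha:r,s\in\set Q\}\subseteq\set F$ has real part $r+sa$ and imaginary part $sb$ independently dense in $\set R$, so $\set F$ is dense in $\set C$. I then pick $\gamma\in\set F$ close to $\zeta_1$ and take
\[
P(z,y):=p((1-z)y)-(1-z)p(\gamma)\in\set F[z,y],
\]
which still has degree $d$ in $y$. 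At $z=0$ we have $P(0,y)=p(y)-p(\gamma)$; since $p'(\zeta_1)\ne 0$ (separability), $0$ is not a critical value of $p$ and hence neither is $p(\gamma)$ for $\gamma$ close enough, so $P(0,y)$ is separable with roots $\gamma,\gamma_2,\ldots,\gamma_d\in\bar{\set F}$, each $\gamma_j$ close to the corresponding $\zeta_j$ via the local inverse of $p$ near $\zeta_j$. The formal implicit function theorem produces unique $w_j\in\bar{\set F}[[z]]$ with $w_j(0)=\gamma_j$ and $p(w_j(z))=(1-z)p(\gamma)$; crucially $w_1\in\set F[[z]]$ because $\gamma\in\set F$ and the theorem respects the ground field. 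Setting $f_j:=w_j/(1-z)$ gives the $d$ distinct power-series roots of $P$, with $f_1\in\set F[[z]]$.

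It remains to control the analytic behaviour. The singularities of the algebraic curve $P=0$ (in $y$) away from $z=1$ occur at $z_\ell=1-v_\ell/p(\gamma)$, where $v_\ell$ ranges over the finite set of complex critical values of $p$; since each $v_\ell\ne 0$, $|z_\ell|\to\infty$ as $p(\gamma)\to 0$, so all $z_\ell$ leave the closed unit disk once $\gamma$ is close enough to $\zeta_1$. Continuing $w_j$ along $z\in[0,1]$ keeps it in a small neighborhood of $\zeta_j$ (applying the local inverse of $p$ near $\zeta_j$ to the segment $c\in[0,p(\gamma)]$), so $w_j(1)=\zeta_j$, and hence $f_j(z)\sim\zeta_j/(1-z)$ at $z=1$, an at-worst simple pole; Theorem~\ref{THM:asympt}(\ref{THM:transfer}) then yields $[z^n]f_j\to\zeta_j$. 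The main delicacy is the choice of $\gamma$: it must lie in $\set F$ (to invoke the implicit function theorem over $\set F$), be close enough to $\zeta_1$ to push all spurious singularities $z_\ell$ outside the closed unit disk, and be close enough that each branch $w_j$ really terminates at the correct root $\zeta_j$ rather than at a permuted one. The density of $\set F$ in the ambient real or complex field is exactly what makes all three constraints simultaneously satisfiable.
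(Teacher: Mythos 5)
Your proposal is correct, and its core construction for the substantive (``furthermore'') part --- perturbing $p((1-z)y)$ by $(1-z)p(\gamma)$ with $\gamma\in\set F$ chosen close to $\zeta_1$ by density, and invoking the formal implicit function theorem over $\set F$ at the root $\gamma$ of $P(0,y)$ --- is exactly the paper's construction (the paper's $\alpha$ is your $p(\gamma)$, its $\eta$ is your $\gamma$). You differ in two worthwhile ways. First, you observe that the unperturbed polynomial $p((1-z)y)$, whose roots are just $\zeta_j/(1-z)$, already settles the main assertion trivially; the paper instead runs the full perturbation machinery with an arbitrary small nonzero $\alpha\in\bar{\set F}$ even for that part. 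Second, to show that the branches $w_j$ are analytic on the closed unit disk and land on the correctly indexed roots at $z=1$, the paper uses a quantitative $\varepsilon$--$\delta$ root-separation argument (the roots of the perturbed polynomial stay in disjoint balls around the $\zeta_j$ for all $|z|\le1$, so branches can never collide), whereas you locate the branch points explicitly at $z_\ell=1-v_\ell/p(\gamma)$ for the critical values $v_\ell$ of $p$ (all nonzero by separability) and push them outside the closed disk by making $|p(\gamma)|$ small, then identify $w_j(1)=\zeta_j$ via the local inverses of $p$. Both mechanisms are sound; yours is more explicit about where the singularities actually are. Two points you leave implicit are harmless: $p(\gamma)\neq0$ is automatic for $d>1$ since the irreducible $p$ has no root in $\set F$ (and $d=1$ is trivial because then $\zeta_1\in\set F$), and $\zeta_j\neq0$, needed to divide by $\zeta_j$ when applying the transfer theorem, is likewise automatic for $d>1$.
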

\begin{proof}
  If $d =1$, then $p(y)= y-\zeta_1$ with $\zeta_1\in \set F$.  Letting
  $P(z,y) = p(y)$ yields the assertions.
	
  Now assume that $d>1$. Then $\zeta_j\neq 0$ for all $j=1, \dots, d$
  since $p$ is irreducible.  Let $\varepsilon>0$ be such that any two
  (real or complex) roots of $p$ have a distance of more
  than~$2\varepsilon$ to each other.  Such an $\varepsilon$ exists
  because $p$ is an irreducible polynomial of degree greater than one,
  and thus has only finitely many distinct roots.  The roots of a
  polynomial depend continuously on its coefficients.  Therefore there
  exists a real number~$\delta>0$ so that perturbing the coefficients
  by up to $\delta$ won't perturb the roots by more
  than~$\varepsilon/2$.  Any positive smaller number than $\delta$
  will have the same property.  By the choice of~$\varepsilon$, any
  such perturbation of the polynomial will have exactly one root in
  each of the open balls of radius $\varepsilon/2$ centered at the
  roots of~$p$.
	
  For fixed nonzero $\alpha\in \bar{\set F}$ with $|\alpha|<\delta/2$,
  consider the perturbation $\tilde P_\alpha(z,y)=p(y)-\alpha (1-z)\in
  \set F(\alpha)[z,y]$.  We will show that
  \begin{quote}
    \leavevmode\llap{$(\ast)$\quad}%
    the polynomial $\tilde P_\alpha(z,y)$ has exactly $d$ distinct
    roots in $\overline{\set F(z)}$ for fixed $z$ with $|z|\leq 1$,
    and any two of them have a distance of more than~$\varepsilon$.
    Moreover, there exist functions $g_1,\dots,g_d$ defined for
    $|z|\leq1$ such that $g_j(1) = \zeta_j$ and $\tilde
    P_\alpha(z,g_j(z))=0$ and $|g_j(z)-\zeta_j|<\varepsilon/2$ for all
    $z$ with $|z|\leq 1$ and $j=1,\dots,d$.
  \end{quote}
  In fact, since $|\alpha|<\delta/2$, for any~$z$ in the disk $|z|\leq
  1$ we have
  \[|{-}\alpha(1-z)|\leq 2|\alpha|<\delta.\]
  Therefore, for every $z$ with $|z|\leq 1$, each root of $\tilde
  P_\alpha(z,y)$ belongs to exactly one open ball of radius
  $\varepsilon/2$ centered at a root $\zeta_j$ of~$p(y)$, and by
  continuity, as $z$ varies and tends to one in the disk $|z|\leq 1$,
  each root approaches the root~$\zeta_j$ inside the corresponding
  open ball (Fig.~\ref{FIG:balls}).
  \begin{figure}[ht]
    \centering
    \begin{tikzpicture}
      \fill[lightgray] (0,.5) rectangle (5,1.5);
      \fill[lightgray] (0,3) rectangle (5,4);
      \draw (0,0) node[below]{$z=0$}--(0,4.5);
      \draw (5,0) node[below]{$z=1$}--(5,4.5);
      \draw (0,1.2) node {$\bullet$} node[left]
            {\vbox{\llap{some other root}
                \llap{$\eta_j$ of $\tilde P_\alpha(0,y)$}}};
      \draw (5,1) node {$\bullet$} node[right]
            {\vbox{\rlap{some other root}
                \rlap{$\zeta_j$ of $\tilde P_\alpha(1,y)=p(y)$}}};
      \draw (-.1,.5)--(5.1,.5);
      \draw (-.1,1.5)--(5.1,1.5);
      \draw (5,2.25) node[right]
            {$\kern-6pt\left.\rule{0pt}{2.15em}\right\}>\varepsilon$};
      \begin{scope}[yshift=1cm]
	\draw (0,2.7) node {$\bullet$} node[left] {$\eta_1$};
	\draw (5,2.5) node {$\bullet$} node[right] {$\zeta_1$};
	\draw (-.1,2)--(5.1,2) node[right] {$\zeta_1-\frac\varepsilon2$};
	\draw (-.1,3)--(5.1,3) node[right] {$\zeta_1+\frac\varepsilon2$};
	\draw[smooth] plot coordinates {(0,2.7) (0.5, 2.47695) (1., 2.78779)
          (1.5, 2.33775) (2., 2.4224) (2.5, 2.62738) (3., 2.68165)
          (3.5, 2.15903) (4., 2.48256) (4.5, 2.58423) (5,2.5)};
      \end{scope}
      \draw[smooth] plot coordinates {(0,1.2) (0.5, 0.705543) (1., 0.84889)
        (1.5, 0.927016) (2., 1.35318) (2.5, 1.34782) (3., 0.726507)
        (3.5, 1.25) (4., 0.65396) (4.5, 1.01007) (5,1)};
    \end{tikzpicture}
    \caption{Separation of roots as used in the proof of
      Lemma~\ref{LEM:minpoly}}\label{FIG:balls}
  \end{figure}
	
  Since any two roots of $p$ are separated by more than
  $2\varepsilon$, the distance between any two roots of $\tilde
  P_\alpha(z,y)$ for fixed $z$ with $|z|\leq 1$ is more than
  $\varepsilon$.  We have thus shown $(\ast)$.
	
  Now, let $\eta_1, \dots, \eta_d\in \bar{\set F}$ be the $d$ distinct
  roots of $\tilde P_\alpha(0,y)$, and let their indexing be such that
  $|\eta_j-\zeta_j|<\varepsilon/2$ for each~$j$.  Note that $\tilde
  P_\alpha(0,y)$ is square-free because $|\eta_i-\eta_j|>\varepsilon$
  for $i\neq j$.  This means that $\gcd(\tilde P_\alpha(0,y), (D_y
  \tilde P_\alpha) (0,y)) = 1$.  By $(D_y\tilde P_\alpha)(0,y) =
  p^\prime(y)$, we have $p'(\eta_j) \neq 0$ for all $j$.  It follows
  that
  \[
  \tilde P_\alpha(0,\eta_j) =0\quad \text{and}\quad (D_y \tilde
  P_\alpha)(0,\eta_j) = p^\prime(\eta_j) \neq 0.
  \]
  Applying the implicit function theorem to each $\tilde
  P_\alpha(z,y+\eta_j)\in \set F(\alpha,\eta_j)[z,y]$ (with $\set
  F(\alpha,\eta_j)$ in place of $\set F$) yields that there exist $d$
  distinct formal power series $g_1(z),\dots, g_d(z)$ with each
  $g_j(z)\in \set F(\alpha,\eta_j)[[z]]$ and $g_j(0) = \eta_j$ such
  that $\tilde P_\alpha(z,g_j(z)) = 0$. By $(\ast)$, for each $j$
  there exists a unique integer $k$ with $1\leq k\leq d$ so that
  $g_j(1) = \zeta_k$ and $|g_j(z)-\zeta_k|<\varepsilon/2$ for any $z$
  with $|z|\leq 1$.  Hence $|\eta_j - \zeta_k| < \varepsilon/2$ since
  $\eta_j=g_j(0)$.  By $|\eta_j-\zeta_j|<\varepsilon/2$, we get
  $|\zeta_j-\zeta_k|<\varepsilon$. Thus $j=k$ because any two roots of
  $p$ are separated by more than $2\varepsilon$.
	
  Moreover, all $g_j(z)\in \set F(\alpha,\eta_j)[[z]]$ annihilated by
  $\tilde P_\alpha(z,y)$ are analytic in the disk $|z|\leq 1$.
  Indeed, since the leading coefficient of~$\tilde P_\alpha(z,y)$
  w.r.t.\ $y$ is a nonzero constant, the singularities of the $g_j(z)$
  could only be branch points.  However, the choices of~$\varepsilon$
  and $\delta$ make it impossible for the $g_j(z)$ to have branch
  points in the disk~$|z|\leq 1$, because in order to have a branch
  point, two roots of the polynomial~$\tilde P_\alpha(z,y)$ w.r.t.~$y$
  would need to touch each other as $z$ varies, and we have ensured
  that they are always separated by more than $\varepsilon$ as $z$
  ranges over the unit disk (see $(\ast)$ and Fig.~\ref{FIG:balls}).
	
  Now define the polynomial
  \[
  P_{\alpha}(z,y)= \tilde P_\alpha(z,(1-z)y) = p((1-z) y) -\alpha
  (1-z)\in \set F(\alpha)[z,y].
  \] 
  Observe that for any $g(z)\in \bar{\set F}[[z]]$, we have
  $g(z)/(1-z)$ is a root of~$P_\alpha(z,y)$ if and only if $g(z)$ is a
  root of $\tilde P_\alpha(z,y)$. Thus there exist exactly $d$
  distinct formal power series
  \[
  f_{j}(z)=\frac{g_j(z)}{(1-z)}\in\set F(\alpha,\eta_j)[[z]]\subseteq
  \bar{\set F}[[z]]
  \]
  with $f_{j}(0) =g_j(0) = \eta_j$ and $g_j(1) = \zeta_j$ such that
  $P_\alpha(z,f_j(z))=0$.
	
  Since each $g_j(z)$ is analytic in the disk $|z|\leq 1$ and $g_j(1)
  = \zeta_j\neq 0$, the point $z=1$ is evidently the only singularity
  of $f_{j}(z)$ in the disk $|z|\leq 1$, and thus it is the only
  dominant singularity. In addition, the point $z=1$ is further a
  simple pole of~$f_{j}(z)$ and then
  \[f_{j}(z) \sim \frac{\zeta_j}{1-z} \quad (z\to 1),\] 
  which gives $[z^n] f_{j}(z) \sim \zeta_j~(n\rightarrow \infty)$ by
  part~\ref{THM:transfer} of Theorem~\ref{THM:asympt} (with $\bar{\set F}$
  in place of~$\set F$). Since $\zeta_j\neq 0$, it follows that
  $\lim_{n\rightarrow\infty}[z^n]f_{j}(z) = \zeta_j.$
	
  Further assume that either $\set F\subseteq \set R$ and $\zeta_1\in
  \bar{\set F}\cap\set R$, or $\set F\setminus\set R \neq
  \emptyset$. In either case, $\set F$ is dense in the field $\set
  F(\zeta_1)$ since $\set F\supseteq \set Q$.  Then by the continuity
  of $p$ at $\zeta_1$, with the above $\delta$ and $\varepsilon$, we
  always can find a number $\eta\in \set F$ with
  $|\eta-\zeta_1|<\varepsilon/2$ so that
  $|p(\eta)|=|p(\eta)-p(\zeta_1)|<\delta/2$.  Fix such $\eta\in\set F$
  and let $\alpha = p(\eta) \in \set F$.  Then $\eta$ is a root of
  $\tilde P_\alpha(0,y)$. Since $|\eta_1-\zeta_1|<\varepsilon/2$, we
  have $|\eta_1-\eta|<\varepsilon$. By $(\ast)$ we know $\eta_1 =
  \eta\in \set F$.  The lemma follows by setting $P(z,y)$ to be
  $P_{\alpha}(z,y)$.
\end{proof}

\begin{example}
  The irreducible polynomial $p(y)=y^3-5y^2+3y+2\in\set Q[y]$ has
  three real roots with approximate values $-.39138238063090084510$,
  $1.2271344421706896320$, $4.1642479384602112131$,
  respectively. Consider the polynomial
  \[P(z,y)= p((1-z)y) - p(4)(1-z) \in\set Q[z,y].\]
  This polynomial was found by the construction described in the
  proof, using the initial approximation~$4$.  The equation $P(z,y)=0$
  has a solution
  \[
  f(z)=4+\tfrac{46}{11}z + \tfrac{5538}{1331}z^2 +
  \tfrac{670794}{161051}z^3 + \tfrac{81144794}{19487171}z^4 +
  \tfrac{9819245130}{2357947691}z^5 + \cdots\in\set Q[[z]],
  \]
  the coefficients of which converge to the third root of
  $p(y)$. Note, for example, that the distance of the coefficient of
  $z^4$ to the root is already less than $10^{-4}$.  The other two
  roots of $P(z,y)$ are
  \begin{alignat*}1
    &\tfrac{1}{2}(1-\sqrt{5}) +\tfrac{1}{110}(45-41 \sqrt{5}) z
    +\tfrac{1}{66550}(27925-24377\sqrt{5}) z^2
    %+\tfrac{1}{40262750}(16807625-14606573 \sqrt{5}) z^3
    +\cdots\in\bar{\set Q}[[z]],\\
    &\tfrac{1}{2}(1+\sqrt{5})
    +\tfrac{1}{110}(45+41 \sqrt{5}) z
    +\tfrac{1}{66550}(27925+24377\sqrt{5}) z^2
    %+\tfrac{1}{40262750}(16807625+14606573 \sqrt{5}) z^3
    +\cdots\in\bar{\set Q}[[z]].
  \end{alignat*}
  Their coefficient sequences converge to the two other roots
  of~$p(y)$, but do not belong to~$\set Q$.
\end{example}

The following theorem clarifies the converse direction for algebraic
sequences.  It turns out that every element in $\cal A_{\set F}$ is
algebraic over $\set F$. Consequently, $\cal A_{\set F}$ is a field.

\begin{theorem}\label{THM:algnos}
  Let $\set F$ be a subfield of $\set C$.
  \begin{enumerate}
  \item\label{THM:algnos1} If $\set F\subseteq \set R$, then
    $\cal A_{\set F}=\bar{\set F}\cap\set R$.

    \medskip
  \item\label{THM:algnos2} If $\set F\setminus \set R \neq \emptyset$,
    then $\cal A_{\set F} = \bar{\set F}$.
  \end{enumerate}
\end{theorem}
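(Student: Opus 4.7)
The plan is to prove both parts simultaneously. The easy inclusion $\supseteq$ in each case follows immediately from Lemma~\ref{LEM:minpoly}: given any $\zeta \in \bar{\set F} \cap \set R$ in case~\ref{THM:algnos1}, or any $\zeta \in \bar{\set F}$ in case~\ref{THM:algnos2}, applying the lemma to the minimal polynomial of $\zeta$ over $\set F$ yields a polynomial $P(z,y) \in \set F[z,y]$ and an algebraic power series $f(z) \in \set F[[z]]$ with $P(z,f(z))=0$ whose coefficient sequence converges to $\zeta$. This exhibits $\zeta \in \cal A_{\set F}$.

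The substantive direction is the converse $\subseteq$. Let $(a_n)_{n=0}^\infty \in \set F^{\set N}$ be an algebraic sequence with limit $\zeta \in \set C$, and set $f(z) = \sum_{n \geq 0} a_n z^n$, so that $P(z,f(z)) = 0$ for some nonzero $P \in \set F[z,y]$. If $\zeta = 0$, there is nothing to prove, so assume $\zeta \neq 0$. The rescaled sequence $a_n/\zeta$ satisfies $a_n/\zeta \sim n^0$, so part~\ref{THM:abelian} of Theorem~\ref{THM:asympt} yields
\[
f(z) \sim \frac{\zeta}{1-z}\quad (z \to 1^-).
\]
On the other hand, since $f$ is a branch of the algebraic curve $\{P=0\}$ and $1 \in \set F$, the function $f(z)$ admits a Puiseux expansion at $z=1$ of the form $f(z) = \sum_{n \geq n_0} c_n (1-z)^{n/s}$ with $s \in \set N_{>0}$, $n_0 \in \set Z$, and all coefficients $c_n \in \bar{\set F}$. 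Matching the leading term of this expansion with the asymptotic equivalent forces $n_0 = -s$ and $c_{-s} = \zeta$, so $\zeta \in \bar{\set F}$.

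To finish: in case~\ref{THM:algnos1}, the hypothesis $\set F \subseteq \set R$ forces $a_n \in \set R$ and hence $\zeta \in \set R$, giving $\zeta \in \bar{\set F} \cap \set R$; in case~\ref{THM:algnos2}, we directly conclude $\zeta \in \bar{\set F}$. Combined with $\supseteq$, both equalities follow.

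The main point to justify carefully is that the Puiseux coefficients $c_n$ lie in $\bar{\set F}$; this is the classical Newton-polygon construction of Puiseux expansions at an algebraic point, which proceeds purely via algebraic operations over the coefficient field. A secondary detail to pin down is that the specific analytic continuation of $f$ along the real axis to $z \to 1^-$ selects a single branch whose $(1-z)$-Puiseux expansion converges in some sectorial neighborhood of $1$ and thus truly governs the real-axis asymptotic, so the leading-term comparison above is legitimate.
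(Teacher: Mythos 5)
Your proposal is correct and takes essentially the same route as the paper: Lemma~\ref{LEM:minpoly} gives the inclusion $\supseteq$, and the inclusion $\subseteq$ is obtained from the Abelian theorem (part~\ref{THM:abelian} of Theorem~\ref{THM:asympt}) combined with the Puiseux expansion of $f$ at $z=1$. The only cosmetic difference is in the last step: the paper deduces $\xi\in\bar{\set F}$ by evaluating the algebraic function $(1-z)f(z)$ at the algebraic point $z=1$, whereas you read $\xi$ off as the leading Puiseux coefficient and invoke the algebraicity of the Newton--Puiseux coefficients; these are equivalent standard facts.
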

\begin{proof}
  \begin{enumerate}
  \item Let $\xi\in \bar{\set F}\cap \set R$.  Then there exists an
    irreducible polynomial $p(y) \in\set F[y]$ such that $p(\xi)=0$.
    By Lemma~\ref{LEM:minpoly}, $\xi$ is equal to a limit of an
    algebraic sequence over $\set F$, which implies that $\xi \in \cal
    A_{\set F}$.
    
    To show the converse inclusion, we let $\xi \in \cal A_{\set F}$.
    When $\xi = 0$, there is nothing to show. Assume that $\xi \neq
    0$.  Then there is an algebraic sequence $(a_n)_{n=0}^\infty \in
    \set F^{\set N}$ such that $\lim_{n\rightarrow \infty}a_n = \xi$.
    Since $\xi \neq 0$, we have $a_n\sim \xi$ $(n\rightarrow \infty)$.
		
    Let $f(z) = \sum_{n=0}^\infty a_n z^n$. Clearly $f(z)$ is an
    algebraic function over~$\set F$.  By part~\ref{THM:abelian} of
    Theorem~\ref{THM:asympt}, $f(z) \sim \xi/(1-z)$ $(z\rightarrow
    1^-)$.  Since $f(z)$ is algebraic, there exists a positive integer
    $s$ such that $f(z)$ admits a Puiseux expansion
    \[
    f(z) = \frac{\xi}{1-z} + \sum_{n=-s+1}^\infty b_n(1-z)^{n/s} \quad
    \text{with} \ b_n \in \set C\ \text{for all}\ n.
    \]
    Setting $g(z) = f(z)(1-z)$ establishes that
    \[g(z) = \xi + \sum_{n=-s+1}^\infty b_n (1-z)^{n/s+1}.\]
    Note that $n/s + 1 > 0$, so $g(z)$ is finite at $z =1$. Sending
    $z$ to one gives $g(1)=\xi$.  By closure properties, $g(z)$ is
    again an algebraic function over $\set F\subseteq \set R$.  Thus
    $\xi = g(1)\in \bar{\set F}\cap \set R$.
    
  \item 
    By Lemma~\ref{LEM:minpoly} and a similar argument as above, we
    have $\cal A_{\set F} = \bar{\set F}$.
  \end{enumerate}
\end{proof}

If we were to consider the class $\cal C_{\set F}$ of limits of
convergent sequences in $\set F$ satisfying linear recurrence
equations with constant coefficients over~$\set F$, sometimes called
C-finite sequences, then an argument analogous to the above proof
would imply that $\cal C_{\set F}\subseteq\set F$, because the power
series corresponding to such sequences are rational functions, and the
values of rational functions over~$\set F$ at points in~$\set F$
evidently gives values in~$\set F$.  The converse direction $\set F
\subseteq\cal C_{\set F}$ is trivial, so we have $\cal C_{\set F}=\set F$.

\begin{corollary}\label{COR:qcase}
  If $\set F\subseteq \set R$, then $\bar{\set F} = \cal A_{\set F(i)}
  = \cal A_{\set F}[i] =\cal A_{\set F} + i \cal A_{\set F}$, where
  $i$ is the imaginary unit.
\end{corollary}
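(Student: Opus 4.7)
The plan is to observe that all four stated sets equal the same thing, by chaining together Theorem~\ref{THM:algnos}(1) and~(2) together with a standard decomposition of algebraic numbers into real and imaginary parts. The key inputs are: $\cal A_{\set F}=\bar{\set F}\cap\set R$ from part~\ref{THM:algnos1} of Theorem~\ref{THM:algnos}, and $\cal A_{\set F(i)}=\overline{\set F(i)}=\bar{\set F}$ from part~\ref{THM:algnos2} (applied to $\set F(i)$, which is not contained in $\set R$, and noting that $\overline{\set F(i)}=\bar{\set F}$ since adjoining $i$ to $\set F$ produces an algebraic extension).

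First I would identify $\cal A_{\set F(i)}$ with $\bar{\set F}$ using part~\ref{THM:algnos2} of Theorem~\ref{THM:algnos}. Next I would show the equality $\bar{\set F}=\cal A_{\set F}[i]$. For the inclusion $\cal A_{\set F}[i]\subseteq\bar{\set F}$, note $\cal A_{\set F}=\bar{\set F}\cap\set R\subseteq\bar{\set F}$ and $i\in\bar{\set F}$, so the ring generated by them lies in $\bar{\set F}$. For the reverse inclusion, given any $\xi\in\bar{\set F}$, use that $\set F\subseteq\set R$ implies $\bar{\set F}$ is closed under complex conjugation (any element satisfies a polynomial in $\set F[y]\subseteq\set R[y]$, whose conjugate roots are again roots), so $\bar\xi\in\bar{\set F}$, and then
\[
\re(\xi)=\tfrac12(\xi+\bar\xi)\in\bar{\set F}\cap\set R=\cal A_{\set F},\qquad
\im(\xi)=\tfrac1{2i}(\xi-\bar\xi)\in\bar{\set F}\cap\set R=\cal A_{\set F},
\]
which yields $\xi=\re(\xi)+i\,\im(\xi)\in\cal A_{\set F}+i\,\cal A_{\set F}$.

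Finally I would record that $\cal A_{\set F}[i]=\cal A_{\set F}+i\,\cal A_{\set F}$, which is immediate from $i^2=-1\in\cal A_{\set F}$ together with the fact that $\cal A_{\set F}=\bar{\set F}\cap\set R$ is a field; thus the ring generated over $\cal A_{\set F}$ by $i$ coincides with the $\cal A_{\set F}$-module $\cal A_{\set F}+i\,\cal A_{\set F}$. Combining the three equalities gives $\bar{\set F}=\cal A_{\set F(i)}=\cal A_{\set F}[i]=\cal A_{\set F}+i\,\cal A_{\set F}$.

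There is no real obstacle here; the argument is a bookkeeping exercise once Theorem~\ref{THM:algnos} is in hand. The only point one needs to be mildly careful about is justifying that $\bar{\set F}$ is stable under complex conjugation, which relies crucially on the hypothesis $\set F\subseteq\set R$ (so that minimal polynomials over $\set F$ have real coefficients, hence complex-conjugate pairs of roots).
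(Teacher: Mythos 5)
Your proposal is correct and follows essentially the same route as the paper: both use part~2 of Theorem~\ref{THM:algnos} to get $\cal A_{\set F(i)}=\overline{\set F(i)}=\bar{\set F}$, the identification $\cal A_{\set F}=\bar{\set F}\cap\set R$ from part~1 together with closure of $\bar{\set F}$ under complex conjugation to decompose any $\xi\in\bar{\set F}$ as $\re(\xi)+i\im(\xi)$, and the fact that $i^2=-1$ lies in the ring $\cal A_{\set F}$ to conclude $\cal A_{\set F}[i]=\cal A_{\set F}+i\cal A_{\set F}$. The only difference is the order in which the equalities are chained, which is immaterial.
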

\begin{proof}
  Since $\cal A_{\set F}$ is a ring and $i^2 = -1\in\set F\subseteq
  \cal A_{\set F}$, we have $\cal A_{\set F}[i] = \cal A_{\set F} + i
  \cal A_{\set F}$.  Since $i\in \bar{\set F}$ and~$\set F\subseteq
  \set R$, the field $\bar{\set F}$ is closed under complex
  conjugation and then
  \[
  \bar{\set F} = (\bar{\set F}\cap\set R) + i(\bar{\set F}\cap \set R)
  = \cal A_{\set F} + i\cal A_{\set F},
  \]
  by part~\ref{THM:algnos1} of Theorem~\ref{THM:algnos}.  It follows
  from part~\ref{THM:algnos2} of Theorem~\ref{THM:algnos} that
  $\cal A_{\set F(i)}=\overbar{\set F(i)}$.  Since $\cal A_{\set F}
  \subseteq \cal A_{\set F(i)}$ and $i\in \cal A_{\set F(i)}$,
  \[
  \bar{\set F}= \cal A_{\set F} + i \cal A_{\set F}\subseteq
  \cal A_{\set F(i)}=\overbar{\set F(i)}=\bar{\set F},
  \]
  The assertion holds.
\end{proof}

The following lemma says that every element in $\bar{\set F}$ can be
written as the value at one of an analytic algebraic function
vanishing at zero, provided that $\set F$ is dense in~$\set C$.  This
will be used in the next section to restrict the evaluation domain.

\begin{lemma}\label{LEM:algseq}
  Let $\set F$ be a subfield of $\set C$ with $\set F\setminus \set R
  \neq \emptyset$.  Let $p(y)\in \set F[y]$ be an irreducible
  polynomial of degree $d$.  Assume that $\zeta_1, \dots, \zeta_d$ are
  all the (distinct) roots of $p$ in $\bar{\set F}$.  Then there is a
  polynomial $P(z,y)\in \set F[z,y]$ of degree~$d$ in~$y$ admitting
  $d$ distinct roots $g_1(z)\in \set F[[z]]$ and $g_2(z), \dots,
  g_d(z)\in \bar{\set F}[[z]]$ such that all $g_j(z)$ are analytic in
  the disk $|z|\leq 1$ with $g_j(0)=0$ and, after reordering (if
  necessary), $g_j(1)=\zeta_j$.
\end{lemma}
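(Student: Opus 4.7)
The plan is to derive this lemma from Lemma~\ref{LEM:minpoly} by a simple change of variables, without redoing the analytic work. Recall that the proof of Lemma~\ref{LEM:minpoly} actually produced two layers of power series: the intermediate roots $h_1(z),\dots,h_d(z)$ (called $g_1,\dots,g_d$ there) of the auxiliary polynomial $\tilde P_\alpha(z,y)=p(y)-\alpha(1-z)$, and the final roots $f_j(z)=h_j(z)/(1-z)$ of $P_\alpha$. The intermediate $h_j$ are analytic in $|z|\leq 1$ with $h_j(0)=\eta_j$ distinct and $h_j(1)=\zeta_j$. Under the hypothesis $\set F\setminus\set R\neq\emptyset$, the same proof lets us choose $\eta_1\in\set F$ and $\alpha=p(\eta_1)\in\set F$, so that $\tilde P_\alpha\in\set F[z,y]$ and $h_1(z)\in\set F[[z]]$.

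The required power series are obtained by a single rescaling: I would set $g_j(z):=z\,h_j(z)$. Then $g_j(0)=0$, $g_j(1)=\zeta_j$, analyticity in $|z|\leq 1$ is inherited from $h_j$, and the $g_j$ remain pairwise distinct because the linear coefficients $\eta_j$ are distinct. Moreover $g_1(z)\in\set F[[z]]$ because $h_1(z)\in\set F[[z]]$, and $g_2,\dots,g_d\in\bar{\set F}[[z]]$ by the same token applied to the other $h_j$.

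The polynomial $P$ is then produced by the inverse substitution $y\mapsto y/z$ in $\tilde P_\alpha$, followed by clearing denominators: writing $p(y)=\sum_{k=0}^d c_k y^k$,
\[
P(z,y)\;:=\;z^{d}\tilde P_\alpha(z,y/z)\;=\;\sum_{k=0}^d c_k\,z^{d-k}y^{k}\;-\;\alpha\,z^{d}(1-z)\;\in\;\set F[z,y].
\]
The leading coefficient in $y$ is $c_d=\lc(p)\neq 0$, so the degree of $P$ in $y$ is exactly $d$. By construction $P(z,g_j(z))=z^d\tilde P_\alpha(z,h_j(z))=0$, and since a polynomial of $y$-degree $d$ has at most $d$ roots in any field, the $d$ distinct series $g_1,\dots,g_d$ exhaust the roots of $P$ in $\bar{\set F}[[z]]$.

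There is no real obstacle here, because all the hard analytic content — continuity of roots under coefficient perturbation, separation of the $d$ branches in the closed unit disk, the implicit function theorem, and the density argument yielding $\eta_1\in\set F$ — has already been carried out in Lemma~\ref{LEM:minpoly}. The only point that requires a moment of care is checking that the substitution $y\mapsto y/z$ does not destroy polynomiality or drop the $y$-degree; the factor $z^d$ in the definition of $P$ is precisely what ensures both.
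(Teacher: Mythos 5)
Your argument is correct and follows the same route as the paper's proof: both reduce to Lemma~\ref{LEM:minpoly} and multiply its roots by $z$ to force the value $0$ at the origin --- your $g_j(z)=z\,h_j(z)$ are literally the paper's $g_j(z)=z(1-z)f_j(z)$, since $h_j=(1-z)f_j$. The only substantive difference is how one certifies $P\in\set F[z,y]$. The paper forms $\prod_{j}(y-g_j(z))$, observes that its coefficients are symmetric in the conjugate roots and hence lie in $\set F(z)$, and then clears denominators; you instead exhibit $P=z^d\tilde P_\alpha(z,y/z)$ explicitly, which is visibly in $\set F[z,y]$, has constant leading coefficient $\lc(p)$ in $y$, and equals $\lc(p)\prod_j\bigl(y-g_j(z)\bigr)$ --- a genuine, if small, simplification. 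The trade-off is that you lean on internals of the proof of Lemma~\ref{LEM:minpoly} (the intermediate roots $h_j$, their analyticity on $|z|\le 1$, the values $h_j(1)=\zeta_j$, and the choice $\alpha=p(\eta_1)\in\set F$ with $\eta_1\in\set F$, all of which are indeed established there under the hypothesis $\set F\setminus\set R\neq\emptyset$), whereas the paper works only from the statement of that lemma and must therefore re-derive $g_j(1)=\zeta_j$ via the Abelian theorem (Theorem~\ref{THM:asympt}). One cosmetic omission: the paper handles $d=1$ separately by taking $P=y-\zeta_1 z$; your construction specializes to exactly this if one takes $\tilde P=p(y)$ in that case, so nothing is lost.
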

\begin{proof}
  If $d=1$ then $p(y) = y-\zeta_1$ with $\zeta_1 \in \set F$.  Letting
  $P(z,y) =y-\zeta_1 z$ yields the assertion.  Otherwise $d>1$ and all
  roots $\zeta_1, \dots, \zeta_d$ are nonzero.
	
  By Lemma~\ref{LEM:minpoly}, there exists a polynomial $\tilde P
  (z,y)$ in~$\set F[z,y]$ of degree~$d$ in~$y$ admitting $d$ distinct
  roots $f_1(z)\in \set F[[z]]$ and $f_2(z),\dots, f_d(z) \in
  \bar{\set F}[[z]]$ such that each $f_j(z)$ is analytic in the disk
  $|z|\leq 1$ except for a simple pole at $z=1$ and, after reordering
  (if necessary),
  \[\lim_{n\rightarrow \infty}[z^n]f_j(z) = \zeta_j, \quad j = 1, \dots d.\]
  Hence, together with part~\ref{THM:abelian} of Theorem~\ref{THM:asympt},
  each $f_j(z)$ admits an expansion at $z=1$ of the form
  \[ f_j(z)  \sim  \frac{\zeta_j}{1-z} \quad(z\to 1^-).\]
  For each~$j$ set $g_j(z) = f_j(z)z(1-z)$.  Then $g_1(z)\in \set
  F[[z]]$, $g_2(z), \dots, g_d(z)\in \bar{\set F}[[z]]$ and they are
  distinct from each other.  Moreover, each $g_j(z)$ is analytic in
  the disk $|z|\leq 1$ with $g_j(0)=0$ and $g_j(1)=\zeta_j$.  By
  closure properties, all $g_j(z)$ are again algebraic functions.
  Define
  \[P(z,y) = \prod_{j=1}^d(y-g_j(z)) 
  = \prod_{j=1}^d \left(y-f_j(z) z(1-z)\right)\in \overbar{\set F(z)}[y].\] 
  Since the coefficients of $P(z,y)$ w.r.t.\ $y$ are symmetric in the
  conjugates $f_1(z),\dots,f_d(z)$, they all belong to the field~$\set
  F(z)$. Multiplying $P$ by a suitable polynomial in $\set F[z]$ gives
  a desired polynomial in $\set F[z,y]$.  The lemma follows.
\end{proof}

\section{D-finite Numbers}

Let us now return to the study of D-finite numbers.  Let $R$ be a
subring of~$\set C$ and $\set F$ be a subfield of $\set C$.  Recall
that by Definition~\ref{DEF:dfinitenos}, the elements of $\cal D_{R,\set F}$
are exactly limits of convergent sequences in $R^{\set N}$ which are
P-recursive over~$\set F$.  Some facts about P-recursive sequences
translate directly into facts about $\cal D_{R,\set F}$.

\begin{proposition}\label{PROP:property}\leavevmode\null
  \begin{enumerate}
  \item\label{PROP:0}
    $R\subseteq\cal D_{R,\set F}$ and $\cal A_{\set F}\subseteq\cal D_{\set F}$.

    \medskip
  \item\label{PROP:1}
    If $R_1\subseteq R_2$ then $\cal D_{R_1,\set F}\subseteq\cal D_{R_2,\set F}$,
    and if $\set F\subseteq\set E$ then $\cal D_{R,\set F}\subseteq
    \cal D_{R,\set E}$.

    \medskip
  \item\label{PROP:2}
    $\cal D_{R,\set F}$ is a subring of~$\set C$.  Moreover, if $R$ is
    an $\set F$-algebra then so is $\cal D_{R,\set F}$.

    \medskip
  \item\label{PROP:3}
    If $\set E$ is an algebraic extension field of~$\set F$, then
    $\cal D_{R,\set F}=\cal D_{R,\set E}$.

    \medskip
  \item\label{PROP:4}
    If $R\subseteq\set F$, then $\cal D_{R,\set F}=\cal D_{R,\quot(R)}$.

    \medskip
  \item\label{PROP:5}
    If $R$ and $\set F$ are closed under complex conjugation, then so
    is $\cal D_{R,\set F}$.
    In this case, we have $ \cal D_{R,\set F}\cap \set R
    =\cal D_{R\cap \set R, \set F}$.
    Moreover, if the imaginary unit $i\in \cal D_{R,\set F}$ then
    $\cal D_{R,\set F} =\cal D_{R\cap \set R, \set F} + i \cal D_{R\cap \set R, \set F}$.
  \end{enumerate}
\end{proposition}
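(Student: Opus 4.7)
The strategy is to dispatch each of the six parts by reducing to facts already established, especially the closure properties of P-recursive sequences and Lemma~\ref{lemma:changeopseq}. Parts~\ref{PROP:0} and~\ref{PROP:1} are the warm-up: for any $\xi \in R$ the constant sequence $a_n = \xi$ is annihilated by $S_n - 1 \in \set F[n]\<S_n>$ and converges to $\xi$, while the inclusion $\cal A_{\set F}\subseteq \cal D_{\set F}$ just records that an algebraic sequence over $\set F$ is in particular P-recursive over $\set F$. Part~\ref{PROP:1} is immediate from Definition~\ref{DEF:dfinitenos}.

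For Part~\ref{PROP:2}, given $\xi,\eta\in \cal D_{R,\set F}$ with witnessing sequences $a_n\to\xi$ and $b_n\to\eta$, both $(a_n+b_n)$ and $(a_nb_n)$ lie in $R^{\set N}$ (since $R$ is a ring) and remain P-recursive over $\set F$ by the standard closure of P-recursive sequences under sum and product; their limits are $\xi+\eta$ and $\xi\eta$. If moreover $R$ is an $\set F$-algebra, then for $c\in \set F$ the sequence $(c a_n)$ still lies in $R^{\set N}$ and is annihilated by the same operator that annihilates $(a_n)$, giving $c\xi \in \cal D_{R,\set F}$. Parts~\ref{PROP:3} and~\ref{PROP:4} then follow from Lemma~\ref{lemma:changeopseq}: in each case the nontrivial inclusion is the one that shrinks the coefficient field, and this is exactly what parts~(\ref{lemma:changeopseq1}) and~(\ref{lemma:changeopseq2}) of that lemma provide, applied to any recurrence witnessing membership of a given $\xi$ in the larger class.

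The one place that needs real care is Part~\ref{PROP:5}. The opening claim, that $\cal D_{R,\set F}$ is closed under complex conjugation, follows by taking a witness $a_n\to\xi$ with $a_n\in R$ P-recursive over $\set F$, and passing to $\bar a_n \to \bar\xi$: since $R$ and $\set F$ are closed under conjugation, $\bar a_n\in R$ and Lemma~\ref{lemma:changeopseq}(\ref{lemma:changeopseq3}) supplies an annihilating operator in $\set F[n]\<S_n>$. For the equality $\cal D_{R,\set F}\cap \set R = \cal D_{R\cap \set R,\set F}$, the inclusion $\supseteq$ is Part~\ref{PROP:1} together with the fact that limits of real sequences are real. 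For the converse, given a real $\xi$ in $\cal D_{R,\set F}$ with witness $a_n\to\xi$, I would replace $a_n$ by $\re(a_n) = \tfrac12(a_n+\bar a_n)$; this stays in $R\cap \set R$ because $R$ is conjugation-closed, is still P-recursive over $\set F$ by Lemma~\ref{lemma:changeopseq}(\ref{lemma:changeopseq3}), and converges to $\re(\xi) = \xi$. Finally, under the hypothesis $i \in \cal D_{R,\set F}$, every $\xi\in \cal D_{R,\set F}$ admits the decomposition $\xi = \re(\xi)+i\im(\xi)$ with $\re(\xi),\im(\xi)\in \cal D_{R,\set F}\cap \set R = \cal D_{R\cap \set R,\set F}$ by the identity just proved, while the reverse inclusion uses that $\cal D_{R,\set F}$ is a ring containing~$i$.

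The only step that is genuinely delicate, as opposed to bookkeeping, is verifying that taking real parts does not leave the smaller ring $R\cap \set R$; the argument above addresses this by exploiting conjugation-closedness of $R$, which is precisely the hypothesis of Part~\ref{PROP:5}. Everything else is a direct appeal to a prior lemma or to elementary properties of convergent sequences and of P-recursive closure.
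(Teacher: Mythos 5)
Your proposal is correct and follows essentially the same route as the paper: parts (1)--(5) are dispatched via the definition, closure properties of P-recursive sequences, and Lemma~\ref{lemma:changeopseq}, and part~(6) via conjugating the witnessing sequence, passing to real parts, and the decomposition $\xi=\re(\xi)+i\im(\xi)$. No gaps.
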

\begin{proof}
  \begin{enumerate}
  \item The first inclusion is clear because every element of $R$ is
    the limit of a constant sequence, and every constant sequence is
    P-recursive. The second inclusion follows from the fact that
    algebraic functions are D-finite, and the coefficient sequences of
    D-finite functions are P-recursive.

    \medskip
  \item Clear.

    \medskip
  \item Follows directly from the corresponding closure properties for
    P-recursive sequences.

    \medskip
  \item Follows directly from part~\ref{lemma:changeopseq1} of
    Lemma~\ref{lemma:changeopseq}.

    \medskip
  \item Follows directly from part~\ref{lemma:changeopseq2} of
    Lemma~\ref{lemma:changeopseq}.

    \medskip
  \item For any convergent sequence $(a_n)_{n=0}^\infty\in R^{\set N}$, we have
    \[
    \re\left(\lim_{n\rightarrow \infty}a_n\right)=\lim_{n\rightarrow \infty}\re(a_n),\
    \im\left(\lim_{n\rightarrow \infty}a_n\right)=\lim_{n\rightarrow \infty}\im(a_n),
    \]
    and thus $\overbar{\lim_{n\rightarrow \infty}a_n}
    = \lim_{n\rightarrow \infty}\bar{a}_n$.
    Hence the first assertion follows by $(\bar a_n)_{n=0}^\infty\in
    R^{\set N}$ and part~\ref{lemma:changeopseq3} of
    Lemma~\ref{lemma:changeopseq}.    
    Since $R$ is closed under complex conjugation,
    $(\re(a_n) )_{n=0}^\infty\in (R\cap \set R)^{\set N}$.  Then the inclusion
    $\cal D_{R,\set F} \cap \set R\subseteq \cal D_{R \cap \set R, \set F}$
    can be shown similarly as the first assertion. The
    converse direction holds by part~\ref{PROP:1}.
    Therefore $ \cal D_{R,\set F}\cap \set R=\cal D_{R\cap \set R, \set F}$.
    Moreover, if $i$ belongs to $\cal D_{R,\set F}$,
    then $\cal D_{R\cap \set R,\set F} +
    i \cal D_{R\cap \set R,\set F}\subseteq \cal D_{R,\set F}$ since
    $\cal D_{R\cap \set R,\set F} \subseteq \cal D_{R,\set F}$.  To
    show the converse inclusion, let $\xi \in \cal D_{R,\set F}$.
    Then $\overbar{\xi}\in \cal D_{R,\set F}$ by the first
    assertion. Since $i \in \cal D_{R,\set F}$ and $R$ is closed under
    complex conjugation, $\re(\xi),\im(\xi)$ both belong to $\cal
    D_{R,\set F}\cap\set R = \cal D_{R\cap \set R,\set F}$ by the
    second assertion. Therefore $\xi=\re(\xi)+i\im(\xi)\in
    \cal D_{R\cap \set R,\set F} + i \cal D_{R\cap \set R,\set F}$.
  \end{enumerate}
\end{proof}
\goodbreak
\begin{example}\leavevmode\null
  \begin{enumerate}
  \item We have $\cal D_{\set Q(\sqrt2),\set Q(\pi,\sqrt2)}= \cal D_{\set
    Q(\sqrt2),\set Q(\sqrt2)}=\cal D_{\set Q(\sqrt2),\set Q}$.  The
    first identity holds by part~\ref{PROP:4}, the second by
    part~\ref{PROP:3} of the proposition.

    \medskip
  \item We have $\cal D_{\bar{\set Q},\set Q}=\cal D_{\bar{\set Q},\set R}$.
    The inclusion ``$\subseteq$'' is clear by part~\ref{PROP:1}.
    For the inclusion ``$\supseteq$'', let~$\xi\in\cal D_{\bar{\set Q},\set R}$.
    Then $\xi=a+ib$ for some $a,b\in\set R$,
    and there exists a sequence $(a_n+ib_n)_{n=0}^\infty$ in~$\bar{\set Q}^{\set N}$
    and an operator $L\in\set R[n]\<S_n>$ such that $L\cdot (a_n+ib_n)=0$
    and $\lim_{n\to\infty}(a_n+ib_n)=a+ib$.
    Since the coefficients of $L$ are real, we then have $L\cdot a_n=0$
    and $L\cdot b_n=0$.
    Furthermore, $\lim_{n\to\infty}a_n=a$ and $\lim_{n\to\infty}b_n=b$.
    Therefore,
    \[
    a,b\in\cal D_{\bar{\set Q}\cap\set R,\set R}
    \stackrel{\text{part~\ref{PROP:4}}}=
    \cal D_{\bar{\set Q}\cap\set R,\bar{\set Q}\cap\set R}
    \stackrel{\text{part~\ref{PROP:3}}}=
    \cal D_{\bar{\set Q}\cap\set R,\set Q}.
    \]
    Hence $a+ib\in\cal D_{\bar{\set Q}\cap\set R,\set Q}+
    i\cal D_{\bar{\set Q}\cap\set R,\set Q}
    \stackrel{\text{part~\ref{PROP:5}}}=\cal D_{\bar{\set Q},\set Q}$,
    as claimed.
  \end{enumerate}
\end{example}

The results for C-finite and algebraic cases motivate the following
theorem, which says that every D-finite number is essentially the
(left) limiting value at one of a D-finite function.

\begin{theorem}\label{THM:eval}
  Let $R$ be a subring of $\set C$ and let $\set F$ be a subfield of
  $\set C$.  Then for every~$\xi \in \cal D_{R,\set F}$, there exists
  $g(z) \in R[[z]]$ D-finite over $\set F$ such that $\xi =\lim_{z\to 1^{-}} g(z)$.
\end{theorem}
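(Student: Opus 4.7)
The plan is to take a defining P-recursive sequence for $\xi$ and form its (Abel-regularized) generating function. Concretely, let $(a_n)_{n=0}^\infty \in R^{\set N}$ be a convergent sequence with $\lim_{n\to\infty} a_n = \xi$ and annihilated by some nonzero $L \in \set F[n]\<S_n>$. Since the sequence converges it is bounded, so the ordinary generating function $f(z) := \sum_{n=0}^\infty a_n z^n$ has radius of convergence at least one and defines an analytic function in the open unit disk. By the standard correspondence between P-recursive sequences and D-finite power series (and the fact that the action of $S_n$ on coefficients corresponds to a combination of $D_z$, multiplication by $z$, and division by suitable polynomials in $z$), the function $f(z)$ is D-finite over $\set F$.

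The key observation is that $\xi = f(1)$ does not generally make sense because the series $\sum a_n$ diverges when $\xi \neq 0$. To fix this, I would set
\[
g(z) := (1-z)\, f(z) = a_0 + \sum_{n=1}^\infty (a_n - a_{n-1})\, z^n.
\]
Since each $a_n \in R$, all coefficients of $g(z)$ lie in $R$, so $g(z) \in R[[z]]$. Moreover, $g(z)$ is D-finite over $\set F$ because the set of D-finite power series over $\set F$ is closed under multiplication by polynomials in $\set F[z]$ (equivalently, by a left-module argument in $\set F[z]\<D_z>$).

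It remains to compute the boundary limit. The partial sums of $\sum (a_n - a_{n-1})$ telescope: $\sum_{n=0}^{N} (a_n - a_{n-1}) = a_N$ (with the convention $a_{-1}=0$), which tends to $\xi$ as $N \to \infty$. Hence the series $\sum_{n=0}^\infty (a_n - a_{n-1})$ converges to $\xi$. By Abel's theorem for power series, whenever $\sum c_n$ converges, $\lim_{z \to 1^-} \sum c_n z^n = \sum c_n$; applied to $c_n = a_n - a_{n-1}$ this yields $\lim_{z \to 1^-} g(z) = \xi$, as desired.

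There is no real obstacle here: the only point that warrants attention is the justification that $g(z)$ inherits D-finiteness from $f(z)$ (which is immediate from closure under polynomial multiplication) and the invocation of Abel's theorem (for which boundedness of $(a_n)$ ensures the requisite convergence in the open unit disk). One could alternatively observe that $g(z)$ itself is the generating function of the P-recursive sequence $(a_n - a_{n-1})_{n \ge 0}$, annihilated by an operator obtained from $L$ via the standard closure of P-recursive sequences under forward differences, which again confirms D-finiteness over $\set F$ without leaving $R$.
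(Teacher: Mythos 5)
Your construction $g(z)=(1-z)f(z)$ is exactly the one used in the paper's proof, and your argument is correct. The only minor difference is in how the boundary limit is justified: the paper treats $\xi=0$ separately and then applies the Abelian asymptotic theorem (part~\ref{THM:abelian} of Theorem~\ref{THM:asympt}) to $f(z)\sim\xi/(1-z)$, whereas you apply classical Abel summation to the telescoping coefficient series of $g$, which is slightly more elementary and handles $\xi=0$ uniformly.
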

\begin{proof}
  The statement is clear when $\xi = 0$. Assume that $\xi$ is nonzero.
  Then there exists a sequence $(a_n)_{n=0}^\infty\in R^{\set N}$
  P-recursive over $\set F$ such that $\lim_{n\rightarrow \infty} a_n
  = \xi$.  Since $\xi \neq 0$, we have $a_n \sim \xi $ $(n \rightarrow
  \infty)$.  Let $f(z)=\sum_{n=0}^\infty a_nz^n$.  By
  part~\ref{THM:abelian} of Theorem~\ref{THM:asympt},
  $f(z)\sim \xi/(1-z)$ as $z\rightarrow 1^{-}$, which, by definition,
  implies that
  \[
  \lim_{z\to 1^{-}} \frac{f(z)}{\xi / (1-z)} = \lim_{z\to 1^{-}}
  \frac{(1-z) f(z)}{\xi} = 1.
  \]
  Letting $g(z)=f(z)(1-z)$ gives $\lim_{z\to 1^{-}} g(z)/\xi = 1$, and
  then $\lim_{z\to 1^{-}} g(z) = \xi$ since $\xi \neq 0$.  The
  assertion follows by noticing that $g(z)\in R[[z]]$ is D-finite over
  $\set F$ due to closure properties.
\end{proof}

\begin{example}\label{EX:evaluation}
  We have $\zeta(3)=\sum_{n=1}^\infty\frac1{n^3}=\lim_{z\to 1^{-}}
  \Li_3(z) = \Li_3(1)$, where
  $\Li_3(z)=\sum_{n=1}^\infty\frac{1}{n^3}z^n$ is the polylogarithm
  function. Note that $\Li_3(z)\in\set Q[[z]]$ is D-finite over
  $\set Q$ and finite at $z=1$.
\end{example}

Note that the above theorem implies that D-finite numbers are
computable when the ring~$R$ and the field~$\set F$ consist of
computable numbers.  This allows the construction of (artificial)
numbers that are not D-finite.

We next turn to some sort of converse of Theorem~\ref{THM:eval}. To
this end, we need to develop several lemmas. First note that we may
assume without loss of generality that $\set F\setminus\set R\neq\emptyset$,
because for any $\set F$ we will always have $\set F(i)\setminus
\set R\neq\emptyset$ and, by part~\ref{PROP:3} of Prop.~\ref{PROP:property},
$\cal D_{R,\set F}=\cal D_{R,\set F(i)}$, so we can always replace $\set F$
by $\set F(i)$. Let us thus assume $\set F\setminus\set R\neq\emptyset$
for the remainder of this section.

The first lemma says that the value of a D-finite function at
any non-singular point in~$\bar{\set F}$ can be represented by 
the value of another D-finite function at one.
\begin{lemma}\label{LEM:evalFbar}
  Let $\set F$ be a subfield of $\set C$ with $\set F\setminus \set R
  \neq \emptyset$ and $R$ be a subring of $\set C$ containing~$\set F$.
  Assume that $f(z)\in \cal D_{R,\set F}[[z]]$ is analytic at zero and
  annihilated by a nonzero operator $L \in \set F[z]\langle D_z\rangle$
  with zero being an ordinary point.  Then for any non-singular
  point $\zeta \in {\bar{\set F}}$ of $L$, there exists
  $h(z)\in \cal D_{R,\set F}[[z]]$ and $M\in \set F[z]\langle D_z\rangle$
  nonzero with zero and one being ordinary points such that
  $M\cdot h(z) = 0$ and $f(\zeta)=h(1)$.
\end{lemma}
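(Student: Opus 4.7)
The plan is to produce $h$ as the composition $f\circ g$ with a suitable algebraic function~$g$ satisfying $g(1)=\zeta$, so that $h(1)=f(\zeta)$, and to obtain $M$ by applying Theorem~\ref{THM:algsubs}. The simplest choice is $g(z)=\zeta z$, which is a root of $P(z,y)=z^d p(y/z)\in\set F[z,y]$, where $p\in\set F[y]$ denotes the minimal polynomial of $\zeta$ of degree~$d$; the remaining roots of $P$ are $g_j(z)=\zeta_j z$, with $\zeta_1=\zeta,\zeta_2,\dots,\zeta_d$ the Galois conjugates of~$\zeta$ over~$\set F$.

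First I check that every $\zeta_j$ is a non-singular point of~$L$: since $\lc(L)\in\set F[z]$ and $\zeta$ is not a root of~$\lc(L)$, the irreducibility of $p$ over~$\set F$ forces $\gcd(p,\lc(L))=1$ in $\set F[z]$, so no conjugate $\zeta_j$ is a root of~$\lc(L)$ either. Next I verify that $h(z):=f(\zeta z)$ lies in $\cal D_{R,\set F}[[z]]$. Its Taylor coefficients are $\zeta^n[z^n]f(z)$; using $\set F\setminus\set R\neq\emptyset$ and Theorem~\ref{THM:algnos}, we have $\bar{\set F}=\cal A_{\set F}$, and Proposition~\ref{PROP:property} (parts~\ref{PROP:0} and~\ref{PROP:1}) gives $\bar{\set F}\subseteq\cal D_{\set F}\subseteq\cal D_{R,\set F}$. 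Since $\cal D_{R,\set F}$ is a ring containing both $\zeta\in\bar{\set F}$ and all coefficients $[z^n]f(z)$, each $\zeta^n[z^n]f(z)$ lies in $\cal D_{R,\set F}$, so $h\in\cal D_{R,\set F}[[z]]$. Applying Theorem~\ref{THM:algsubs} to $P$, $L$, and the point~$1$ now produces a nonzero $M\in\set F[z]\<D_z>$ annihilating $h$ with $1$ an ordinary point of~$M$.

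The remaining task is to arrange that $0$ is also an ordinary point of~$M$. From the proof of Theorem~\ref{THM:algsubs}, the solution space of $M$ is spanned by functions of the form $f_i(\zeta_j z)$, where $f_i$ ranges over a basis of local solutions of~$L$. Each such function admits analytic continuation along any path in~$\set C$ whose image under the map $z\mapsto\zeta_j z$ avoids the finitely many singularities of~$L$; since $\zeta_j\cdot 0=0$ is not a singularity of~$L$, we can continue all of these basis functions from a neighborhood of~$1$ to a neighborhood of~$0$ along a suitable path, and linear independence is preserved because any relation valid near~$0$ would contradict, upon continuing backwards along the path, the linear independence near~$1$. Hence the local solution space of $M$ at~$0$ has full dimension equal to the order of~$M$, which means $0$ is at most an apparent singularity of~$M$. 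Desingularization, used exactly as at the end of the proof of Theorem~\ref{THM:algsubs}, then replaces $M$ by an operator in $\set F[z]\<D_z>$ for which both $0$ and $1$ are ordinary points. The subtle step is this treatment of~$0$: the conclusion rests on showing that $0$ is apparent rather than a true singularity, via the monodromy-independent linear independence of the extended basis described above; everything else is bookkeeping based on Theorem~\ref{THM:algsubs} and the ring structure of~$\cal D_{R,\set F}$.
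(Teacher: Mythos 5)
Your proof is correct, but it reaches the conclusion by a genuinely different construction of the auxiliary algebraic substitution. The paper invokes Lemma~\ref{LEM:algseq} (itself resting on the perturbation construction of Lemma~\ref{LEM:minpoly}) to manufacture nonconstant algebraic functions $g_j$ analytic on the closed unit disk with $g_j(0)=0$, $g_j(1)=\zeta_j$ and $g_1\in\set F[[z]]$, and then applies Theorem~\ref{THM:algsubs} at the point $0$, obtaining $1$ as a second ordinary point from the theorem's ``moreover'' clause. You instead take the linear branches $g_j(z)=\zeta_j z$ of the homogenized minimal polynomial $z^d p(y/z)\in\set F[z,y]$, which trivially satisfy $g_j(0)=0$ and $g_j(1)=\zeta_j$; this bypasses Lemma~\ref{LEM:algseq} entirely. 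The price is that $g_1\notin\set F[[z]]$ in general, so $h=f\circ g_1\in\cal D_{R,\set F}[[z]]$ no longer follows merely from $\set F\subseteq R$; you pay it correctly by citing Theorem~\ref{THM:algnos} to get $\bar{\set F}=\cal A_{\set F}\subseteq\cal D_{R,\set F}$ and then using the ring structure. Your verification that the conjugates $\zeta_j$ are non-singular points of $L$ is identical to the paper's. Two small remarks: since the $g_j$ are entire, you could have taken $\Omega=\set C$ and read off that $0$ is an ordinary point of $M$ directly from the ``moreover'' clause of Theorem~\ref{THM:algsubs} with $\eta=0$ (the clause does not require the $d$ branches to take distinct values at $\eta$, and indeed the paper itself uses it at a point where all branches coincide); your hand-rolled continuation-plus-desingularization argument is essentially a re-derivation of that clause, and if anything it is more explicit than the paper about choosing the continuation path so that $\zeta_j\gamma(t)$ avoids the singularities of $L$. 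Also, the degenerate case $\zeta=0$ (where $g\equiv 0$ is constant) is silently covered by the constant case of Theorem~\ref{THM:algsubs}, which you may want to mention.
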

\begin{proof}
  Let $\zeta_1 \in \bar{\set F}$ be a non-singular point of~$L$.  Then
  there exists an irreducible polynomial $p(z)\in \set F[z]$ of degree~$d$
  such that $p(\zeta_1)=0$.  Let~$\zeta_2, \dots, \zeta_d$ be all
  other roots of $p$ in $\bar{\set F}$.  By Lemma~\ref{LEM:algseq},
  there exists a polynomial $P(z,y)\in \set F[z,y]$ of degree~$d$
  in~$y$ admitting $d$ distinct roots $g_1(z)\in \set F[[z]]$ and
  $g_2(z), \dots, g_d(z)\in \bar{\set F}[[z]]$ such that all $g_j(z)$
  are analytic in the disk $|z|\leq 1$ with $g_j(0)=0$ and
  $g_j(1)=\zeta_j$.  In particular, all $g_j(z)$ are analytic in a
  neighborhood of zero including the point $z=1$.
	
  Since $g_1(1) =\zeta_1$ is not a singularity of $L$ by assumption,
  none of $g_j(1)=\zeta_j$ is a singularity of~$L$.  In fact, suppose
  otherwise that for some $2\leq \ell \leq d$, the point $g_\ell(1) =
  \zeta_\ell$ is a root of~$\lc(L)$.  Since $\lc(L)\in \set F[z]$ and
  $p$ is the minimal polynomial of~$\zeta_\ell$ over~$\set F$, we know
  that $p(z)$ divides $\lc(L)$ over~$\set F$.  Thus $\zeta_1$ is also
  a root of $\lc(L)$, a contradiction.
	
  By assumption, zero is an ordinary point of $L$. Note that $g_j(0) =
  0$ for all $j$.  It follows from Theorem~\ref{THM:algsubs} that
  there exists a nonzero operator $M\in \set F[z]\langle D_z\rangle$
  with $M\cdot(f\circ g_1) =0$ which does not have zero or one among
  its singularities.  By part~\ref{PROP:0} of
  Proposition~\ref{PROP:property}, $\set F\subseteq R\subseteq
  \cal D_{R,\set F}$.  Since $f(z) \in \cal D_{R,\set F}[[z]]$
  and~$g_1(z)\in \set F[[z]]$ with $g_1(0)=0$, we have $f(g_1(z))\in
  \cal D_{R,\set F}[[z]]$.  Setting $h(z) = f(g_1(z))$ completes the
  proof.
\end{proof}

With the above lemma, it suffices to consider the case when the
evaluation point is in $R\cap \set F$.  This is exactly what the next
two lemmas are concerned about.

\begin{lemma}\label{LEM:coeffRinsideconverg}
  Assume that $f(z) = \sum_{n=0}^\infty a_n z^n \in R[[z]]$ is
  D-finite over $\set F$ and convergent in some neighborhood of zero.
  Let $\zeta \in R\cap\set F$ be in the disk of convergence.  Then
  $f^{(k)}(\zeta) \in \cal D_{R,\set F}$ for all~$k \in \set N$.
\end{lemma}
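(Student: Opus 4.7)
The plan is to exhibit, for each $k\in\set N$, a convergent sequence in $R^{\set N}$ that is P-recursive over~$\set F$ and whose limit equals $f^{(k)}(\zeta)$. The natural candidate is the sequence of partial sums of the termwise-differentiated Taylor expansion evaluated at~$\zeta$.

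First I would dispose of the trivial case $\zeta=0$: here $f^{(k)}(0)=k!\,a_k\in R\subseteq\cal D_{R,\set F}$ by part~\ref{PROP:0} of Proposition~\ref{PROP:property}. So assume $\zeta\neq 0$. Define
\[
b_n := (n+k)(n+k-1)\cdots(n+1)\,a_{n+k}\,\zeta^n\quad (n\in\set N).
\]
Since $\zeta\in R$, $a_n\in R$ and $R$ is a ring, we have $(b_n)_{n=0}^\infty\in R^{\set N}$. I would then show that $(b_n)$ is P-recursive over~$\set F$ by invoking closure properties: the sequence $(a_n)$ is P-recursive over~$\set F$ by hypothesis; the shift $(a_{n+k})$ is therefore P-recursive over~$\set F$; multiplication by the polynomial $(n+k)(n+k-1)\cdots(n+1)\in\set F[n]$ preserves P-recursiveness; and since $\zeta\in\set F$ with $\zeta\neq 0$, the geometric sequence $(\zeta^n)$ is P-recursive over~$\set F$ (it satisfies $u_{n+1}-\zeta u_n=0$), so the Hadamard product with $(\zeta^n)$ again preserves P-recursiveness.

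Next I would form the partial sums $S_N:=\sum_{n=0}^N b_n$. By the closure of P-recursive sequences under summation, $(S_N)_{N=0}^\infty$ is P-recursive over~$\set F$, and clearly $(S_N)\in R^{\set N}$. Re-indexing with $m=n+k$ shows
\[
\sum_{n=0}^\infty b_n = \sum_{m=k}^\infty m(m-1)\cdots(m-k+1)\,a_m\,\zeta^{m-k} = f^{(k)}(\zeta),
\]
provided the series on the right converges absolutely. But $|\zeta|$ lies strictly inside the radius of convergence of~$f$ (here I use that $\zeta$ is in the open disk of convergence; if convergence is only meant in the closed sense, a Cauchy-Hadamard argument at a slightly larger radius handles this), and termwise differentiation preserves the radius of convergence, so the series for $f^{(k)}$ converges absolutely at~$\zeta$. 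Hence $\lim_{N\to\infty} S_N = f^{(k)}(\zeta)$, and $(S_N)$ witnesses that $f^{(k)}(\zeta)\in\cal D_{R,\set F}$.

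The main step requiring care is the construction of a recurrence over~$\set F$ for the product $\bigl((n+k)!/n!\bigr)\,a_{n+k}\,\zeta^n$. None of this is deep individually, but one must keep track of the fact that all ingredients remain defined over~$\set F$ (which is exactly why the hypothesis $\zeta\in\set F$, not merely $\zeta\in R$, is needed) and that the multiplication by~$\zeta^n$ yields a recurrence with coefficients in $\set F[n]$ rather than $\set F[n][\zeta^{-1}]$ in some disguised form. Once these closure properties are cleanly invoked, the argument is immediate from Definition~\ref{DEF:dfinitenos}.
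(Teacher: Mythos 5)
Your proposal is correct and follows essentially the same route as the paper's proof: both arguments reduce the claim to exhibiting the partial sums of the (termwise differentiated) Taylor series at $\zeta$ as a convergent $R$-valued P-recursive sequence, using that $(a_n)$ and $(\zeta^n)$ are P-recursive over $\set F$ together with closure under shift, Hadamard product, and partial summation. The only cosmetic difference is that the paper first reduces to $k=0$ by noting $f^{(k)}$ is again D-finite over $\set F$ with coefficients in $R$ and the same radius of convergence, whereas you unroll the coefficients of $f^{(k)}$ explicitly; your extra case split at $\zeta=0$ is harmless but unnecessary.
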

\begin{proof}
  For any $k\in \set N$, it is well-known that $f^{(k)}(z)\in R[[z]]$
  is also D-finite and has the same radius of convergence at zero as
  $f(z)$.  Note that since $f(z)$ is D-finite over~$\set F$, so is
  $f^{(k)}(z)$.  Thus to prove the lemma, it suffices to show the case
  when $k=0$, i.e., $f(\zeta)\in \cal D_{R,\set F}$.
	
  Since $f(z)$ is D-finite over $\set F$, the coefficient sequence
  $(a_n)_{n=0}^\infty$ is P-recursive over~$\set F$.  Note that $\zeta
  \in R\cap\set F$ is in the disk of convergence of $f(z)$ at zero, so
  \[
  f(\zeta) = \sum_{n=0}^\infty a_n \zeta^n = \lim_{n\rightarrow \infty}
  \sum_{\ell = 0}^n a_\ell \zeta^\ell.
  \]
  Since $(\zeta^n)_{n=0}^\infty$ is P-recursive over~$\set F$, the
  assertion follows by noticing that $(\sum_{\ell = 0}^n a_\ell
  \zeta^\ell)_{n=0}^\infty\in R^{\set N}$ is P-recursive over $\set F$
  due to closure properties.
\end{proof}

\begin{example}\leavevmode\null
  \begin{enumerate}
  \item Since $\exp(z)=\sum_{n=0}^\infty\frac1{n!}z^n\in\set Q[[z]]$
    is D-finite over~$\set Q$, and converges everywhere, we get from
    the lemma that the numbers $\e, 1/\e, \sqrt{\e}$ belong to~$\cal
    D_{\set Q,\set Q}$.  More precisely, since we are currently only
    considering non-real fields~$\set F$, we could say that $\exp(z)$
    is D-finite over $\bar{\set Q}$, therefore
    $\e,1/\e,\sqrt{\e}\in\cal D_{\set Q,\bar{\set Q}}$, but by
    Proposition~\ref{PROP:property}, $\cal D_{\set Q,\bar{\set Q}}
    =\cal D_{\set Q,\set Q}$.

    \medskip
  \item All (finite) values of G-function at algebraic numbers belong
    to $\cal D_{\set Q(i)}$, as remarked in the introduction. Indeed,
    \cite[Theorem~1]{FiRi2014} tells us that any complex number $\xi$
    that appears as the value of a G-function at some algebraic number
    has real and imaginary parts both of the form $f(1)$ for some
    G-function $f(z)$ with rational coefficients and whose radius of
    convergence is greater than one. Together with the above lemma, we
    readily see that such $\xi$ belong to $\cal D_{\set Q, \set Q(i)}
    + i \cal D_{\set Q, \set Q(i)}$, which is actually equal to
    $\cal D_{\set Q(i)}$ by part~\ref{PROP:5} of
    Proposition~\ref{PROP:property}.
  \end{enumerate}
\end{example}
\begin{lemma}\label{LEM:coeffDinsideconverg}
  Let $R$ be a subring of $\set C$ containing $\set F$.  Let $f(z) =
  \sum_{n=0}^\infty a_nz^n\in \cal D_{R,\set F}[[z]]$ be an analytic
  function at zero.  Assume that there exists a nonzero operator $L
  \in \set F[z]\langle D_z\rangle$ with zero being an ordinary point such
  that $L\cdot f(z) =0$.  Let~$r>0$ be the smallest modulus of roots
  of $\lc(L)$ and let $\zeta \in\set F$ with~$|\zeta|<r$.  Then
  $f^{(k)}(\zeta) \in \cal D_{R,\set F}$ for all $k \in \set N$.
\end{lemma}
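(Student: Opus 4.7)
The plan is to reduce to Lemma~\ref{LEM:coeffRinsideconverg} by decomposing $f$ into a finite $\cal D_{R,\set F}$-linear combination of basis solutions of $L$ whose Taylor coefficients lie in $\set F$, and then invoking that lemma on each basis solution separately.

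First, since $0$ is an ordinary point of $L$, standard ODE theory (after normalizing by the nonvanishing leading coefficient at $0$) provides a fundamental system of $s$ analytic solutions $f_0,\dots,f_{s-1}$ at zero, where $s$ denotes the order of $L$. I would pin them down by the initial conditions $[z^i]f_j=\delta_{ij}$ for $0\le i,j<s$. Applying the recurrence on Taylor coefficients induced by $L$ starting from these rational initial values shows inductively that $f_j(z)\in\set F[[z]]$ for every $j$. Since the nearest singularity of $L$ to $0$ sits at distance $r$, each $f_j$ is analytic on the open disk $|z|<r$. Because $f(z)\in\cal D_{R,\set F}[[z]]$ is itself a solution of $L$ at zero whose first $s$ Taylor coefficients are $a_0,\dots,a_{s-1}\in\cal D_{R,\set F}$, matching initial data and using the common recurrence yields the identity
\[
f(z)=\sum_{j=0}^{s-1}a_j\,f_j(z),
\]
valid both as formal power series and as analytic functions on $|z|<r$.

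Differentiating $k$ times and evaluating at $\zeta$, where $|\zeta|<r$, then gives
\[
f^{(k)}(\zeta)=\sum_{j=0}^{s-1}a_j\,f_j^{(k)}(\zeta).
\]
Since $\set F\subseteq R$, every $f_j$ lies in $\set F[[z]]\subseteq R[[z]]$, is D-finite over $\set F$, and has radius of convergence at least $r$. Because $\zeta\in\set F\subseteq R\cap\set F$ lies in that disk, Lemma~\ref{LEM:coeffRinsideconverg} applies and produces $f_j^{(k)}(\zeta)\in\cal D_{R,\set F}$ for each $j$ and $k$. Combined with $a_j\in\cal D_{R,\set F}$ and the fact that $\cal D_{R,\set F}$ is a ring by part~\ref{PROP:2} of Proposition~\ref{PROP:property}, the finite sum of products above lies in $\cal D_{R,\set F}$, which is the desired conclusion.

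The only delicate point is the verification that the basis solutions $f_j$ can be chosen inside $\set F[[z]]$: this rests on ordinariness of zero, which makes the value of $\lc(L)$ at $0$ a nonzero element of $\set F$ and so permits us to solve recursively for every higher Taylor coefficient within $\set F$, starting from the rational initial data. Everything else is routine bookkeeping with closure properties once the decomposition $f=\sum_j a_j f_j$ is in hand.
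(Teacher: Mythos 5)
Your proof is correct and follows essentially the same route as the paper's: both construct the fundamental system of analytic solutions at the ordinary point zero with Kronecker-delta initial conditions (whose coefficients lie in $\set F$ by the induced recurrence), write $f=\sum_{j} a_j f_j$, note that all solutions converge on $|z|<r$ because singularities of solutions can only occur at roots of $\lc(L)$, and then combine Lemma~\ref{LEM:coeffRinsideconverg} with the ring closure properties of $\cal D_{R,\set F}$. No gaps.
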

\begin{proof}
  Let $\rho$ be the order of $L$. Since zero is an ordinary point of
  $L$, there exist P-recursive sequences $(c_n^{(0)})_{n=0}^\infty,
  \dots, (c_n^{(\rho-1)})_{n=0}^\infty$ in $\set F^{\set N}\subseteq
  R^{\set N}$ with $c_j^{(m)}$ equal to the Kronecker delta
  $\delta_{mj}$ for $m, j= 0,\dots, \rho-1$, so that the set
  $\{\sum_{n=0}^\infty c_n^{(m)} z^n \}_{m=0}^{\rho -1}$ forms a basis
  of the solution space of~$L$ near zero. Since $L\cdot f(z) =0$ and
  $f(z)$ is analytic at zero,
  \begin{equation}\label{EQ:linearcomb}
    f(z)=\sum_{n=0}^\infty a_n z^n = a_0 \sum_{n=0}^\infty c_n^{(0)}
    z^n + \dots + a_{\rho-1}\sum_{n=0}^\infty c_n^{(\rho-1)}z^n
  \end{equation}
  near zero.  Note that the singularities of solutions of~$L$ can only
  be roots of $\lc(L)$.  Hence $f(z)$ as well as $\sum_{n=0}^\infty
  c_n^{(m)} z^n$ for $m = 0, \dots, \rho-1$ are convergent in the disk
  $|z|<r$.  It follows from $|\zeta|<r$ and
  Lemma~\ref{LEM:coeffRinsideconverg} that the set
  $\{\sum_{n=0}^\infty c_n^{(m)} \zeta^n\}_{m=0}^{\rho -1}$ belongs to
  $\cal D_{R,\set F}$. Since $a_0,\dots,a_{\rho-1}\in \cal D_{R,\set F}$,
  letting $z = \zeta$ in Eq.~\eqref{EQ:linearcomb} yields that
  $f(\zeta)$ is D-finite by closure properties.  Differentiating
  Eq.~\eqref{EQ:linearcomb}, we find by
  Lemma~\ref{LEM:coeffRinsideconverg} that for $k>0$, the derivative
  $f^{(k)}(\zeta)$ also belongs to $\cal D_{R,\set F}$.
\end{proof}
\goodbreak
\begin{example}\leavevmode\null\label{EX:inside}
  \begin{enumerate}
  \item
    We know from Proposition~\ref{PROP:property} that $\sqrt{2}\in\cal
    D_{\set Q}$.  The series
    \[
    (z+1)^{\sqrt2}=1 + \sqrt2z+(1-\frac1{\sqrt2})z^2 + \cdots\in
    \set Q(\sqrt2)[[z]]\subseteq\cal D_{\set Q}[[z]]
    \]
    is D-finite over~$\set Q$, an annihilating operator is
    $(z+1)^2D_z^2+(z+1)D_z-2$.  Here we have the radius $r=1$. Taking
    $\zeta=\sqrt2-1$, the lemma implies $\sqrt2^{\sqrt2}\in\cal D_{\set Q}$.

    \medskip
  \item\label{EX:bessel}
    Observe that the lemma refers to the singularities of the operator
    rather than to the singularities of the particular solution at
    hand. For example, it does not imply that $J_1(1)\in\cal D_{\set Q,\set Q}$,
    where $J_1(z)$ is the first Bessel function, because
    its annihilating operator is $z^2D_z^2+zD_z+(z^2-1)$, which has a
    singularity at zero.  It is not sufficient that the particular
    solution $J_1(z)\in\set Q[[z]]$ is analytic at zero.  Of course, in
    this particular example we see from the series representation
    $J_1(1)=\frac12\sum_{n=0}^\infty \frac{(-1/4)^n}{(n+1)n!^2}$ that
    the value belongs to~$\cal D_{\set Q,\set Q}$.

    \medskip
  \item
    The hypergeometric function
    $f(z):={_2F_1}(\frac13,\frac12,1,z+\frac12)$ can be viewed as an
    element of~$\cal D_{\set Q,\set Q}[[z]]$:
    \begin{alignat*}1
      f(z) &= \sum_{n=0}^\infty
      \underbrace{\left[\frac1{2^nn!}\sum_{k=n}^\infty
          \frac{(1/2)_k(1/3)_k}{k!(k-n)!}
          \Bigl(\frac12\Bigr)^k\right]}_{\in\cal D_{\set Q,\set Q}}z^n.
    \end{alignat*}
    The function $f$ is annihilated by the operator
    \[
    L = 3(2z-1)(2z+1)D_z^2 +(22z-1)D_z + 2.
    \]
    This operator has a singularity at $z=1/2$, and there is no
    annihilating operator of $f$ which does not have a singularity
    there. Although
    $f(1/2)=\frac{\Gamma(1/6)}{\Gamma(1/2)\Gamma(2/3)}$ is a finite
    and specific value, the lemma does not imply that this value is a
    D-finite number.
  \end{enumerate}
\end{example}

\begin{theorem}\label{THM:coeffDoutsideconverg}
  Let $\set F$ be a subfield of $\set C$ with $\set F\setminus
  \set R\neq \emptyset$ and let $R$ be a subring of~$\set C$
  containing~$\set F$.  Assume that $f(z) \in \cal D_{R,\set F}[[z]]$
  is analytic at zero and there exists a nonzero operator $L \in
  \set F[z]\langle D_z\rangle$ with zero being an ordinary point so that
  $L\cdot f(z) =0$.  Further assume that~$\zeta \in\bar{\set F}$ is
  not a singularity of~$L$.  Then $f^{(k)}(\zeta) \in \cal D_{R,\set F}$
  for all $k \in \set N$.
\end{theorem}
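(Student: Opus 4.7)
The plan is to argue in two stages: first prove the theorem under the additional assumption $\zeta\in\set F$, and then reduce the general case $\zeta\in\bar{\set F}$ to that one via an algebraic substitution.

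For the case $\zeta\in\set F$, I would proceed by analytic continuation along a discrete path. Since $\set F\setminus\set R\neq\emptyset$, the subfield $\set F$ contains $\set Q+\set Q\alpha$ for some non-real $\alpha$, hence is dense in~$\set C$. Because $\zeta$ is a non-singular point of~$L$, I can choose a finite chain $\zeta_0=0,\zeta_1,\ldots,\zeta_m=\zeta$ in~$\set F$ with each gap $|\zeta_{i+1}-\zeta_i|$ strictly smaller than the distance from $\zeta_i$ to the nearest singularity of~$L$. I would then prove by induction on~$i$ that $f^{(k)}(\zeta_i)\in\cal D_{R,\set F}$ for every $k\in\set N$. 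The base case $i=0$ is immediate since $f^{(k)}(0)=k!\,[z^k]f(z)\in\cal D_{R,\set F}$ by hypothesis. For the inductive step, consider $\tilde f(z):=f(z+\zeta_i)=\sum_k(f^{(k)}(\zeta_i)/k!)z^k$, which lies in $\cal D_{R,\set F}[[z]]$ by the inductive hypothesis and is annihilated by the shifted operator $\tilde L(z,D_z):=L(z+\zeta_i,D_z)\in\set F[z]\langle D_z\rangle$ (using $\zeta_i\in\set F$). The smallest modulus of a root of $\lc(\tilde L)$ equals the distance from $\zeta_i$ to the nearest singularity of~$L$, which by construction is strictly greater than $|\zeta_{i+1}-\zeta_i|$; and $\zeta_{i+1}-\zeta_i\in\set F$. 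Lemma~\ref{LEM:coeffDinsideconverg} applied to~$\tilde f$ at that point then yields $\tilde f^{(k)}(\zeta_{i+1}-\zeta_i)=f^{(k)}(\zeta_{i+1})\in\cal D_{R,\set F}$ for every~$k$.

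For the general case $\zeta\in\bar{\set F}$, I would apply Lemma~\ref{LEM:evalFbar} to produce an algebraic substitution $g_1(z)\in\set F[[z]]$ with $g_1(0)=0$ and $g_1(1)=\zeta$, together with an operator $M\in\set F[z]\langle D_z\rangle$ annihilating $h(z):=f(g_1(z))\in\cal D_{R,\set F}[[z]]$ and having both $0$ and $1$ as ordinary points. Invoking the already-established $\set F$-case of the present theorem on $h$ at the point $1\in\set F$ yields $h^{(j)}(1)\in\cal D_{R,\set F}$ for every $j\in\set N$. By Fa\`a di Bruno's formula, each $h^{(j)}(1)$ equals $(g_1'(1))^j\,f^{(j)}(\zeta)$ plus a polynomial in the lower-order quantities $f^{(\ell)}(\zeta)$ for $\ell<j$, with coefficients that are polynomials in the derivatives $g_1^{(m)}(1)$. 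A direct inspection of the construction in Lemma~\ref{LEM:algseq} (where $g_1(z)=f_1(z)\,z(1-z)$ for $d>1$ and $g_1(z)=\zeta z$ for $d=1$) gives $g_1'(1)=\pm\zeta\neq0$. Since $g_1$ is algebraic over~$\set F$, all the numbers $g_1^{(m)}(1)$ and $1/g_1'(1)$ lie in~$\bar{\set F}$, which by Theorem~\ref{THM:algnos} together with parts~\ref{PROP:0} and~\ref{PROP:1} of Proposition~\ref{PROP:property} is contained in $\cal A_{\set F}\subseteq\cal D_{\set F}\subseteq\cal D_{R,\set F}$. Induction on~$j$ then solves the triangular Fa\`a di Bruno system for $f^{(j)}(\zeta)\in\cal D_{R,\set F}$.

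The most delicate piece is the Fa\`a di Bruno inversion in the second stage: one must verify that the system is genuinely triangular with invertible leading coefficients $g_1'(1)^j\neq0$, and that all auxiliary values $g_1^{(m)}(1)$ are themselves D-finite numbers. Once that algebraic bookkeeping is in place, the proof is simply a chaining of Lemma~\ref{LEM:coeffDinsideconverg} (for the inner analytic-continuation step along a dense path in~$\set F$) with Lemma~\ref{LEM:evalFbar} (for the algebraic reduction from~$\bar{\set F}$ to~$\set F$).
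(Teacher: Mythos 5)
Your overall architecture is the paper's: reduce from $\zeta\in\bar{\set F}$ to $\zeta\in\set F$ via Lemma~\ref{LEM:evalFbar}, and settle the case $\zeta\in\set F$ by analytic continuation along a chain of points of~$\set F$ whose consecutive gaps are smaller than the distance to the nearest root of $\lc(L)$, applying Lemma~\ref{LEM:coeffDinsideconverg} to the recentered function $f(z+\zeta_i)$ at each step. That first stage is correct and coincides in substance with the published proof.

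The gap is in your Fa\`a di Bruno inversion. You assert that ``direct inspection of the construction in Lemma~\ref{LEM:algseq} gives $g_1'(1)=\pm\zeta$''; for $d>1$ this is false. There $g_1(z)=f_1(z)\,z(1-z)=z\,u(z)$, where $u(z):=(1-z)f_1(z)$ is the branch of $p(y)=\alpha(1-z)$ with $u(1)=\zeta$ coming from Lemma~\ref{LEM:minpoly}; differentiating $p(u(z))=\alpha(1-z)$ gives $u'(1)=-\alpha/p'(\zeta)$ and hence $g_1'(1)=\zeta-\alpha/p'(\zeta)$, not $\pm\zeta$. Nothing in Lemmas~\ref{LEM:minpoly} or~\ref{LEM:algseq} as stated excludes $\alpha=\zeta\,p'(\zeta)$, in which case $g_1'(1)=0$, your triangular system loses its diagonal, and $f^{(j)}(\zeta)$ for $j\geq1$ cannot be recovered from the $h^{(j)}(1)$. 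The repair is short but must be stated: $\alpha=p(\eta)$ can be made arbitrarily small in modulus by taking $\eta$ close enough to $\zeta$, so one may additionally impose $|\alpha|<|\zeta\,p'(\zeta)|$, which forces $g_1'(1)\neq0$. The remaining bookkeeping --- $g_1^{(m)}(1)\in\bar{\set F}=\cal A_{\set F}\subseteq\cal D_{R,\set F}$ and the triangular solve --- is fine. To your credit, the published proof simply invokes Lemma~\ref{LEM:evalFbar} (which only asserts $f(\zeta)=h(1)$) and leaves the derivative case implicit, so you are filling a genuine omission in the paper; you just need the extra half-sentence controlling~$\alpha$.
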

\begin{proof}
  By Lemma~\ref{LEM:evalFbar}, it suffices to show that the assertion
  holds for $\zeta = 1$ (or more generally $\zeta \in \set F$).  Now
  assume that $\zeta \in \set F$.  We apply the method of analytic
  continuation.
	
  Let $\mathcal{P}$ be a simple path with a finite cover
  $\bigcup_{j=0}^s \cal B_{r_j}(\beta_j)$, where $s\in \set N$,
  $\beta_0=0$, $\beta_s = \zeta$, $\beta_j \in \set F$, $r_j>0$ is the
  distance between~$\beta_j$ and the zero set of~$\lc(L)$, and $\cal
  B_{r_j}(\beta_j)$ is the open ball centered at~$\beta_j$ and with
  radius $r_j$.  Moreover, $\beta_{j+1}\in \cal B_{r_j}(\beta_j)$ for
  each~$j$ (as illustrated by Fig.~\ref{FIG:ac}).  Such a path exists
  because $\set F$ is dense in $\set C$ and the zero set of~$\lc(L)$
  is finite.  Since the path $\mathcal{P}$ avoids all roots of
  $\lc(L)$, the function $f(z)$ is analytic along $\mathcal{P}$.  We
  next use induction on the index $j$ to show that~$f^{(k)}(\beta_j)
  \in \cal D_{R,\set F}$ for all $k \in \set N$.
  \begin{figure}[ht]
    \centering
    \begin{tikzpicture}[scale=1.3]
      \draw[dashed,thick](0,0) circle (1); 
      \fill (0,0) circle (2pt) node[below right] {$\beta_0=0$}; 
      \fill (0.5,.75) circle (2pt) node[below right] {$\beta_1$}; 
      \draw[dashed,](0.5,0.75) circle (.7); 
      \fill (1,1) circle (2pt) node[below right] {$\beta_2$};
      \draw[dashed](1,1) circle (.65); 
      \fill (1.5,.915) circle (2pt) node[below right] {$\beta_3$}; 
      \fill (2.8,1.76) circle (2pt) node[below right] {$\beta_{s-2}$}; 
      \draw[dashed](2.8,1.76) circle (.6); 
      \fill (3,2.04) circle (2pt) node[below right] {$\beta_{s-1}$}; 
      \draw[dashed](3,2.04) circle (.6); 
      \fill (3.2,2.36) circle (2pt) node[below right] {$\beta_s =\zeta$};
      \node at (2.3,.8) {$\mathcal{P}$}; 
      \draw[->] (0,0)--(-.8,-.6);  
      \node at (-.6,-.3) {$r_0$}; 
      \draw[->] (.5,.75)--(-.2,.75); 
      \node at (.2,.55) {$r_1$}; 
      \draw[->] (1,1)--(1.3,1.57663); 
      \node at (1.4,1.3) {$r_2$}; 
      \draw[->] (2.8,1.76)--(2.2,1.76); 
      \node at (1.9,1.8) {$r_{s-1}$}; 
      \draw[->] (3,2.04)--(2.8,2.60569); 
      \node at (2.9,2.8) {$r_s$}; 
      \draw[thick, domain = 0:1.2] plot (\x,{1-(\x-1)^2}); 
      \draw[thick,domain=1.2:1.9] plot (\x,{1/2*(\x-1.5)^2+.915}); 
      \draw[thick,dashed,domain=1.9:2.5] plot (\x, {1/2*(\x-1.5)^2+.915}); 
      \draw[thick,domain=2.5:3.2] plot (\x, {1/2*(\x-1.5)^2+.915}); 
      \draw[thick] (-1.5,0)--(3,0);
      \draw[thick] (0,-1.5)--(0,3); 
      \draw plot[mark = x,mark options = {color = magenta,scale=1.3}] (1,0); 
      \draw plot[mark = x,mark options = {color = magenta,scale=1.3}] (-.8,.6); 
      \draw plot[mark = x,mark options = {color = magenta,scale=1.3}]
      (.530329,1.44934); 
      \draw plot[mark = x,mark options = {color = magenta,scale=1.3}] (-.7,-1);
      \draw plot[mark = x,mark options = {color = magenta,scale=1.3}] (1,-.6); 
      \draw plot[mark = x,mark options = {color = magenta,scale=1.3}] (1.5,.4); 
      \draw plot[mark = x,mark options = {color = magenta,scale=1.3}]
      (2.43226,2.23409); 
      \draw plot[mark = x,mark options = {color = magenta,scale=1.3}] (2.8,.9);
    \end{tikzpicture}
    \caption{a simple path $\mathcal{P}$ with a finite cover
      $\bigcup_{j=0}^s \cal B_{r_j}(\beta_j)$
      ({\protect\tikz\protect\draw plot[mark = x,mark options = {color
            = magenta,scale=1.3}] (0,0);} stands for the roots of
      $\lc(L)$)}\label{FIG:ac}
  \end{figure}
  
  It is trivial when $j=0$ as $f^{(k)} (\beta_0) =f^{(k)} (0)\in
  \cal D_{R,\set F}$ for $k\in \set N$ by assumption.  Assume now that
  $0< j\leq s$ and $f^{(k)}(\beta_{j-1})\in \cal D_{R,\set F}$ for all $k
  \in \set N$.  We consider $f(\beta_{j})$ and its derivatives.
	
  Recall that $r_{j-1}>0$ is the distance between $\beta_{j-1}$ and
  the zero set of $\lc(L)$.  Since~$f(z)$ is analytic
  at~$\beta_{j-1}$, it is representable by a convergent power series
  expansion
  \[f(z) = \sum_{n=0}^\infty \frac{f^{(n)}(\beta_{j-1})}{n!}(z-\beta_{j-1} )^n 
  \quad \text{for all} \ |z-\beta_{j-1}|<r_{j-1}.\] 
  By the induction hypothesis, $f^{(n)}(\beta_{j-1})/n! \in \cal D_{R,\set F}$ 
  for all $n\in \set N$ and thus $f(z) \in \cal D_{R,\set F}[[z-\beta_{j-1}]]$. 
  Let $Z = z-\beta_{j-1}$, i.e., $z = Z+\beta_{j-1}$.  
  Define $g(Z) = f(Z+\beta_{j-1})$ and $\tilde{L}$ to be the operator 
  obtained by replacing $z$ in $L$ by $Z+\beta_j$. 
  Since $\beta_{j-1}\in \set F\subseteq \cal D_{R,\set F}$ and~$D_z = D_Z$,
  we have $g(Z)\in \cal D_{R,\set F}[[Z]]$ and
  $\tilde{L} \in \set F[Z]\langle D_Z\rangle$.  
  Note that~$L\cdot f(z) =0$ and $\beta_{j-1}$ is an ordinary point
  of~$L$ as $r_{j-1}>0$.
  It follows that $\tilde{L} \cdot g(Z) = 0$ and zero is an ordinary
  point of~$\tilde{L}$.  Moreover, we see that $r_{j-1}$ is now the
  smallest modulus of roots of $\lc(\tilde{L})$.  
  Since $|\beta_{j} - \beta_{j-1}| < r_{j-1}$, 
  applying Lemma~\ref{LEM:coeffDinsideconverg} to $g(Z)$ yields
  $f^{(k)}(\beta_{j}) =g^{(k)}(\beta_{j}-\beta_{j-1}) \in \cal D_{R,\set F}$
  for $k\in \set N$.  
  Thus the assertion holds for~$j = s$. The theorem follows.
\end{proof}

\begin{example}
  By the above theorem, $\exp(\sqrt2)$ and $\log(1+\sqrt3)$ both
  belong to~$\cal D_{\set Q}$.  We also have $\e^\pi \in \cal D_{\set Q}$.
  This is because $\e^\pi = (-1)^{-i}$, with $i$ the imaginary
  unit, is equal to the value of the D-finite function $(z+1)^{-i}\in
  \set Q(i)[[z]]$ at $z=-2$ (outside the radius of convergence;
  analytically continued in consistency with the usual branch cut
  conventions) and then $\e^\pi\in \cal D_{\set Q(i)} \cap \set R =
  \cal D_{\set Q}$.
\end{example}

\section{Open Questions}\label{SEC:conclusion}

We have introduced the notion of D-finite numbers and made some first
steps towards understanding their nature. We believe that, similarly
as for D-finite functions, the class is interesting because it has
good mathematical and computational properties and because it contains
many special numbers that are of independent interest. We conclude
this paper with some possible directions of future research.

\smallskip\goodbreak\noindent
{\bf Evaluation at singularities.} 
While every singularity of a D-finite function must also be a
singularity of its annihilating operator, the converse is in general
not true.  We have seen above that evaluating a D-finite function at a
point which is not a singularity of its annihilating operator yields a
D-finite number.  It would be natural to wonder about the values of a
D-finite function at singularities of its annihilating operator,
including those at which the given function is not analytic but its
evaluation is finite.  Also, we always consider zero as an ordinary
point of the annihilating operator.  If this is not the case, the
method used in the paper fails, as pointed out by part~\ref{EX:bessel}
of Example~\ref{EX:inside}.

\smallskip\goodbreak\noindent
{\bf Quotients of D-finite numbers.}
The set of algebraic numbers forms a field, but we do not have a
similar result for D-finite numbers. It is known that the set of
D-finite functions does not form a field. Instead, Harris and
Sibuya~\cite{HaSi1985} showed that a D-finite function~$f$ admits a
D-finite multiplicative inverse if and only if $f'/f$ is algebraic.
This explains for example why both $\e$ and $1/\e$ are D-finite, but
it does not explain why both $\pi$ and $1/\pi$ are D-finite. It would
be interesting to know more precisely under which circumstances the
multiplicative inverse of a D-finite number is D-finite. Is
$1/\log(2)$ a D-finite number? Are there choices of $R$ and $\set F$
for which $\cal D_{R,\set F}$ is a field?

\smallskip\goodbreak\noindent
{\bf Roots of D-finite functions.}
A similar pending analogy concerns compositional inverses. We know
that if $f$ is an algebraic function, then so is its compositional
inverse~$f^{-1}$. The analogous statement for D-finite functions is
not true. Nevertheless, it could still be true that the values of
compositional inverses of D-finite functions are D-finite numbers,
although this seems somewhat unlikely. A particularly interesting
special case is the question whether (or under which circumstances)
the roots of a D-finite function are D-finite numbers.

\smallskip\goodbreak\noindent
{\bf Evaluation at D-finite number arguments.}
We see that the class $\cal C_{\set F}$ of limits of convergent
C-finite sequences is the same as the values of rational functions at
points in~$\set F$, namely the field~$\set F$.  Similarly, the class
$\cal A_{\set F}$ of limits of convergent algebraic sequences
essentially consists of the values of algebraic functions at points
in~$\bar{\set F}$.  Continuing this pattern, is the value of a
D-finite function at a D-finite number again a D-finite number? If so,
this would imply that also numbers like $\e^{\e^{\e^{\e}}}$ are
D-finite. Since $1/(1-z)$ is a D-finite function, it would also imply
that D-finite numbers form a field.

\section*{Acknowledgments}

We would like to thank Alin Bostan, Marc Mezzarobba, Stephane Fischler
and the anonymous referee for their helpful comments.

\end{document}